\newtheorem{definition}{Definition}
\newtheorem{subdefinition}{Definition}[definition]
\newtheorem{theorem}{Theorem}
\newtheorem{lemma}{Lemma}
\newtheorem{corollary}[theorem]{Corollary}
\newtheorem{proposition}[theorem]{Proposition}
\newtheorem{example}{Example}
\xdef\@endgadget#1{{\unskip\nobreak\hfil\penalty50\hskip1em\hbox{}\nobreak\hfil#1\parfillskip=0pt\finalhyphendemerits=0\par}}
\newcommand\@Endofsymbol{$\triangledown$}
\newcommand\Endofremark{\@endgadget{\@Endofsymbol}}
 \global\long\def\R{\mathbb{R}}
 \global\long\def\ker{\mathrm{Ker}}
\newcommand{\Z}{\mathbb{Z}}
\DeclareMathOperator{\rk}{rk}
\DeclareMathOperator{\rs}{rsl}
\DeclareMathOperator{\srs}{s-rsl}
\DeclareMathOperator{\mrs}{(s-)rsl}
\DeclareMathOperator{\diag}{diag}
\newcommand{\cS}{\mathcal{S}}
\newcommand{\bfl}{\bar{F}_\ell}
\newcommand{\fl}{{F}_\ell}
\def\blfootnote{\xdef\@thefnmark{}\@footnotetext}
\newcounter{cAss}
\newcounter{cAssSaved}
\newcommand\Ass[1]{\ensuremath{\boldsymbol{\mathcal A}_{\text{\hspace{0.75pt}\bf#1}}}}
\newlength\asswidth
\definecolor{forestgreen}{rgb}{0.13, 0.55, 0.13}
\definecolor{orange}{rgb}{1,0.49,0}
\date{}
\begin{document}

\title{On Structural Rank and Resilience\\ of Sparsity Patterns}
\author{Mohamed Ali Belabbas\thanks{M.-A. Belabbas is with the Department of Electrical and Computer Engineering and the Coordinated Science Laboratory, University of Illinois, Urbana-Champaign. Email:  \texttt{belabbas@illinois.edu}.},\,\,\,\, Xudong Chen\thanks{X. Chen is with the Department of Electrical, Computer, and Energy Engineering, University of Colorado Boulder. Email: \texttt{xudong.chen@colorado.edu}.  },\,\,\,\, Daniel Zelazo\thanks{D. Zelazo is with the Faculty of Aerospace Engineering, Technion-Israel Institute of Technology, Haifa, Israel. Email: \texttt{dzelazo@technion.ac.il}.
 }
 }

\maketitle
\begin{abstract}
\blfootnote{The first two authors contributed equally to the manuscript in all categories.}
A sparsity pattern in $\R^{n \times m}$, for $m\geq n$, is a vector subspace of matrices admitting a basis consisting of canonical basis vectors in $\R^{n \times m}$. We represent a sparsity pattern by a matrix with $0/\star$-entries, where $\star$-entries are  arbitrary real numbers and $0$-entries are equal to $0$.  We say that a sparsity pattern has full structural rank if the maximal rank of matrices contained in it is $n$. 
In this paper, we investigate the degree of resilience of patterns with full structural rank: We address questions such as how many $\star$-entries can be removed without decreasing the structural rank and, reciprocally, how many $\star$-entries one needs to add so as to increase the said degree of resilience to reach a target. Our approach goes by translating these questions into max-flow problems on appropriately defined bipartite graphs. Based on these translations, we provide algorithms that solve the problems in polynomial time. 
\end{abstract}

%%%%%%%%%%%%%%%%%%%%%%%%%%%%%%%%%%%%%
\section{Introduction}

The development of network-enabled systems~\cite{SavaglioGanzhaPaprzycki2020,  SanislavZeadallyMois2017, 10.1007/978-3-030-52840-9_5, Zhao2019} is creating new opportunities for integrating theretofore disconnected systems. These systems, however, come with new challenges associated to their secure and resilient operation in the face of network-level faults, or even malicious actors intentionally aiming to disrupt their functionality. 
An inherent challenge in problems related to the resilience of these systems against faults or attacks stems from their combinatorial nature, which is induced by the network interconnections.  Indeed, the removal or addition of a communication link in a network is a binary operation, and does not fit well within the framework of, say, robust control theory.  
One closely related problem is the study of structural properties of dynamical systems. The structure is often described by graphs, where the sparsity pattern of the system parameters indicate the presence or absence of edges in an associated graph. 
The so-called \emph{structural system theory} aims to determine whether controllable or stable dynamics can be sustained by a given system structure, which is described via a sparsity patterns for the system matrices~\cite{Lin1974,dion2003generic,BELABBAS2013981,chen2020sparse,gharesifard2021structural, kirkoryan2014decentralized}. 

In this paper, we address a novel structural resilience problem, which is motivated by the recent work~\cite{Sharf2019b} about passivation of networked systems, which will be elaborated on in Subsection~\ref{subsec.motivation}. 
We describe below the mathematical problems addressed in this paper. 
A sparsity pattern is a vector space of matrices admitting a basis comprised only of canonical basis vectors. %, a precise definition is given below. 
We represent them  as matrices with $0/\star$-entries, where the $\star$ denote arbitrary real entries. 
The starting point of our analysis is to determine  whether a given  sparsity pattern contains an open set of matrices of {\em full rank}. Necessary and sufficient conditions for this requirement to hold are in fact well known and can easily be described using a graph machinery (see Lemma~\ref{lem:frpm}).  
The core problems we address in this paper go beyond that. We consider the {\em resilience} of the full-rank property of these sparsity patterns---here, resilience refers to the property of a sparsity pattern being full-rank after the removal of $\star$-entries (which can be viewed as attacks on communication links). The list of specific problems will be presented in Subsection~\ref{sec:problemformulation}.  
Although the motivation for investigating the above mentioned problems are from passivation of networked systems, they readily apply to other areas, such as the resilience of the structural stability of linear systems~\cite{belabbas_algorithmsparse_2013}. 

Outside of the control theory literature,  problems seeking to understand the structural rank of sparsity patterns have also been addressed in the mathematical literature. In particular, we mention the {\em minimum rank} problem, which aims to determine the minimum rank of real symmetric matrices in a sparsity pattern (where the $\star$-entries {\em have to} be nonzero); see~\cite{Fallat2007, Hogben2010} and the references therein for a comprehensive survey on this subject.  A typical approach to the minimal rank problem involves analyzing a corresponding \emph{inverse problem}, which is trying to identify a graph structure from the spectrum of a matrix~\cite{Gutkin_2001}. 
Some other relevant work include the rank reduction of the adjacency matrix of directed graphs by vertex and/or edge deletions~\cite{Meesum2016b}.

\subsection{Problem Motivation}\label{subsec.motivation}

To illustrate the importance of the sparsity patterns for network systems and their influence on network robustness and resilience, we will look at a general network systems architecture.
The present subsection is thus meant to provide a system theoretic motivation for the problems mentioned above, but the remainder of the paper does not rely on the notions introduced here. %The reader only interested in the results may thus skip it.

Consider an ensemble of $n$ agents and $m$ controllers that may exchange state information over a network represented by a matrix $M\in \mathbb{R}^{n\times m}$.  
For ease of exposition, we let the entries of $M$ be either $0$ or $1$.  In this sense, when $M_{ij} = 1$, it means that controller $j$ has access to state information from agent $i$.  The matrix $M$ can therefore also be associated with a graph ${G}=({V},{E})$ with $|{V}| = n+m$ nodes, and edge-set ${E}$ describing the sparsity pattern of~$M$.

For this setup, we assume the agents and the controllers are associated with the dynamical systems $\Sigma_i: u_i \mapsto y_i$ for $i=1,\ldots,n$ and $\Pi_j: \zeta_j \mapsto \mu_j$, for $j=1,\ldots,m$.
Here we assume the agent dynamics and controllers are SISO systems (i.e., $u_i,y_i,\zeta_j,\mu_j\in \mathbb{R}$). The loop is closed by taking $\zeta(t) = M^\top y(t)$ and $u(t) = -M\mu(t)$. This interconnection structure is motivated by the association of each controller with a set of agents. Thus, controller $j$ receives a linear combination of the outputs of adjacent agents (the adjacency relation is encoded in the $j$th column of $M$), and distributes its control output back to the same set of agents. We denote such systems by the triplet $(\Sigma,\Pi,M)$, shown in Fig.~\ref{fig.network}.  Note that if the matrices $M$ are taken to be the \emph{incidence matrix} of a graph $G$, then the system $(\Sigma,\Pi,M)$ describes the well-known diffusively coupled networks \cite{Dorfler2014, Hale1997, Arcak2007}.

\begin{figure}
    \centering
     \subfloat[Block-diagram of the network system $(\Sigma, \Pi, M)$.]{ {\scalebox{.28}{\includegraphics{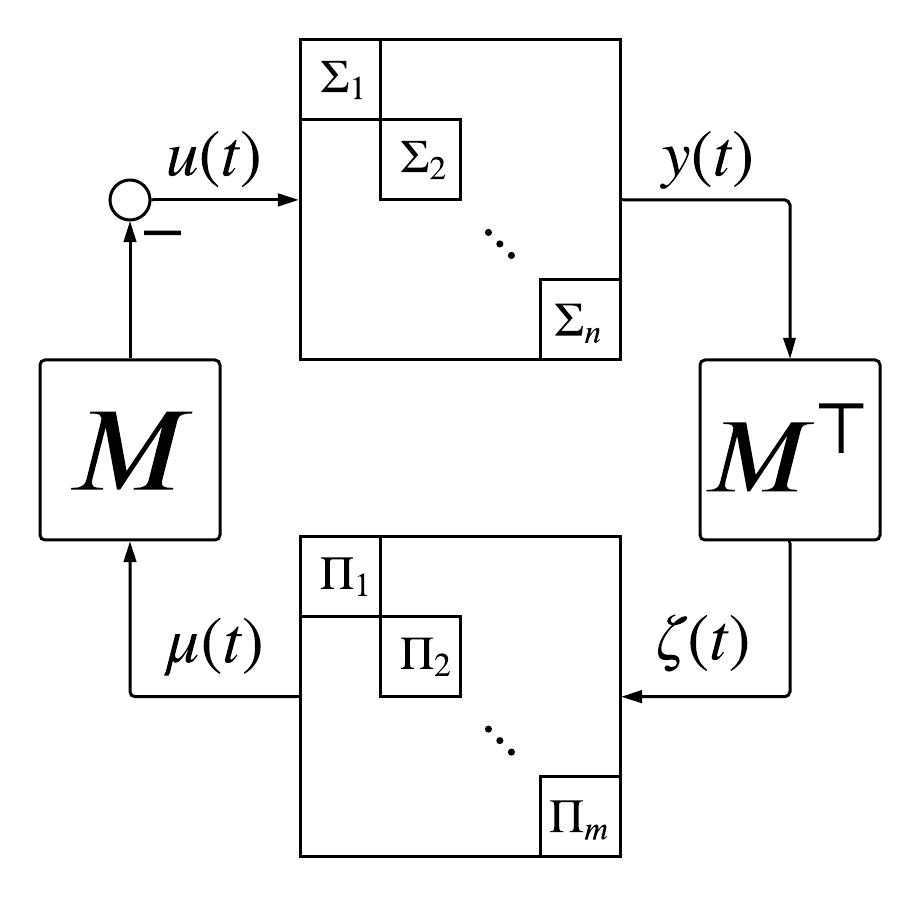}}\label{fig.network} } }
  \hspace{.5cm} \subfloat[Passivation of the system $\Sigma$ over the network interconnection $M$.] {\scalebox{.31}{\includegraphics{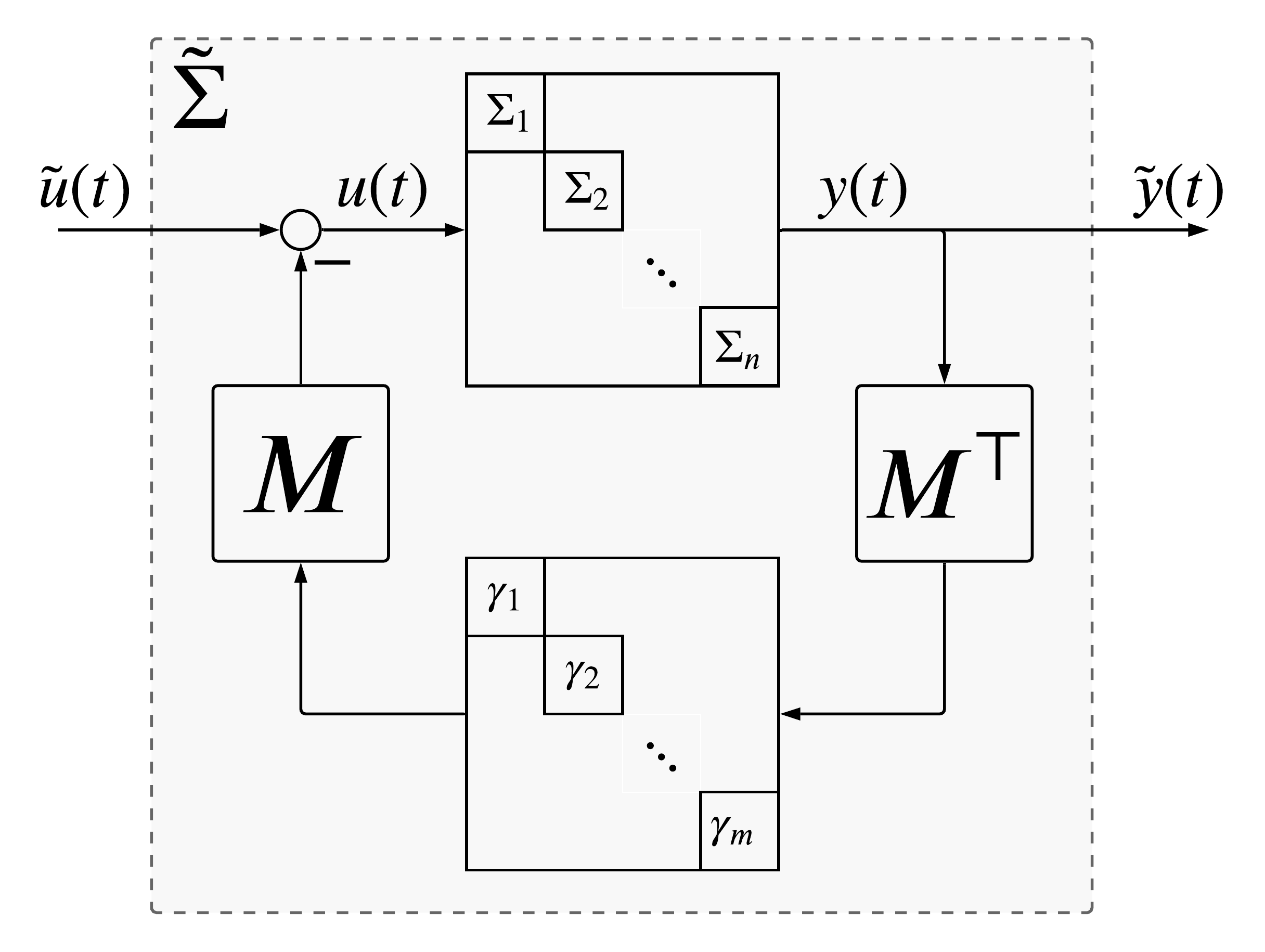}}\label{fig.passify}}
    \caption{A general network system and a network passivation approach.}
    \label{fig.structure}
\end{figure}

The stability of the interconnection in Fig.~\ref{fig.network} can be guaranteed by the (output-strict) passivity of the systems $\Sigma_i$ and passivity of the controllers $\Pi_j$. 

In many applications, however, it may not be possible to guarantee the passivity of the agents $\Sigma_i$. This corresponds to some or all of the agents possessing a negative passivity index; see~\cite{Zhu2014, Sharf2020a} for more details on this notion.  Nevertheless, it is still desirable to be able to interconnect these so-called passive-short systems with each other to achieve group coordination tasks. 
In this direction, there have been recent works that aim to passify these agents over the network itself \cite{Atman2018, Kojima2019, Sharf2019b}.   This architecture can be seen in Fig.~\ref{fig.passify}, where the gains $\gamma_i$ are chosen to ensure the system from input $\tilde{u}$ to output $\tilde{y}$ is passive.  
If this can be achieved, then it can be shown that the network interconnection $(\tilde{\Sigma}, \Pi, M)$ is stable, where $\tilde{\Sigma}$ maps $\tilde{u}$ to output $\tilde{y}$~\cite{Sharf2019b}. The conditions for which this is possible were explored in~\cite{Sharf2019b}.  The main result can be extended to the general network structure $M$, so we state it below without a proof:

\begin{lemma}\label{cor.weights}
Let $R=\diag(\rho_1,\ldots,\rho_n)$ be a diagonal matrix containing the passivity index $\rho_i$ of each agent $\Sigma_i$, and assume that $\rho_i<0$ for at least one agent. If $R + M\diag(\gamma)M^\top$ is positive-semi definite, then $\tilde{\Sigma}$, mapping $\tilde{u}(t)$ to $\tilde{y}(t)$ as in Fig. \ref{fig.passify}, is passive with respect to any steady-state input-output pair. Moreover, if $R + M\diag(\gamma)M^\top$ is positive-definite, then $\tilde{\Sigma}$ is output-strict passive. 
Furthermore, there exist scalars $\gamma_i$, for $i=1,\ldots,m$, such that $R+M\diag(\gamma)M^\top > 0$ if and only if $x^\top Rx>0$ for any $x \in \ker(M^\top)$.
\end{lemma}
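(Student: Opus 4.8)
The plan is to prove the stated equivalence by establishing the two implications separately, as the two directions have genuinely different characters. Throughout I read the condition ``$x^\top R x > 0$ for any $x \in \ker(M^\top)$'' as the assertion that the restriction of the quadratic form $x \mapsto x^\top R x$ to the subspace $\ker(M^\top)$ is positive definite, i.e. strictly positive on every \emph{nonzero} $x \in \ker(M^\top)$; this is forced since at $x=0$ the form vanishes. The forward implication (existence of $\gamma$ implies the kernel condition) is a short algebraic observation, while the converse is the substantive claim and where I expect the real work to lie.

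For necessity, suppose some $\gamma$ yields $R + M\diag(\gamma)M^\top > 0$. Take any nonzero $x \in \ker(M^\top)$, so $M^\top x = 0$. Then the cross term collapses,
\[
x^\top M\diag(\gamma)M^\top x = (M^\top x)^\top \diag(\gamma)(M^\top x) = 0,
\]
and positive definiteness of the sum gives $0 < x^\top\!\left(R + M\diag(\gamma)M^\top\right)\!x = x^\top R x$, which is exactly the desired condition.

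For sufficiency I would exploit a \emph{uniform} gain: set $\gamma_i = c$ for all $i$, so that $M\diag(\gamma)M^\top = c\, M M^\top$, and then argue that $c$ large enough works. The key structural facts are that $MM^\top \geq 0$ with $\ker(MM^\top) = \ker(M^\top)$, so $MM^\top$ is strictly positive on the complementary subspace $\ker(M^\top)^\perp = \mathrm{range}(M)$, while the hypothesis makes $R$ strictly positive on $\ker(M^\top)$ itself. To combine these I would pick an orthogonal matrix $P = [\,U\ V\,]$ whose first block $U$ spans $\ker(M^\top)$ and whose second block $V$ spans $\mathrm{range}(M)$, giving the block form
\[
P^\top\!\left(R + c\,MM^\top\right)\!P = \begin{pmatrix} R_{11} & R_{12} \\ R_{12}^\top & R_{22} + c\,V^\top M M^\top V \end{pmatrix},
\]
where $R_{11} = U^\top R U > 0$ by hypothesis and $V^\top M M^\top V > 0$ because $M^\top V$ has full column rank (any $z$ with $M^\top V z = 0$ forces $Vz \in \ker(M^\top) \cap \mathrm{range}(M) = \{0\}$). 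Since $R_{11} > 0$, a Schur-complement test reduces positive definiteness to $R_{22} + c\,V^\top M M^\top V - R_{12}^\top R_{11}^{-1} R_{12} > 0$, and as the coefficient matrix of $c$ is positive definite this holds for all sufficiently large $c$.

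The main obstacle is precisely the sufficiency argument: one must show that a purely positive-semidefinite perturbation $c\,MM^\top$, whose entire deficiency lives on $\ker(M^\top)$, can be used to repair the indefiniteness of $R$ everywhere else. The crucial point is that $R$ is already ``safe'' on $\ker(M^\top)$ while the perturbation can be driven arbitrarily large on the complementary subspace, so the two regions are handled by disjoint mechanisms; the Schur complement is the clean bookkeeping that merges them and absorbs the cross terms $R_{12}$. As an alternative to the block computation I would note a compactness argument gives the same conclusion: if no $c$ worked, unit vectors $x_c$ with $x_c^\top R x_c + c\,\lVert M^\top x_c\rVert^2 \le 0$ would, along a convergent subsequence, force $\lVert M^\top x_c\rVert \to 0$ and hence produce a nonzero limit $x^\ast \in \ker(M^\top)$ with $x^{\ast\top} R x^\ast \le 0$, contradicting the hypothesis.
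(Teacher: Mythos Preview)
The paper does not actually prove this lemma: it is stated explicitly ``without a proof'' as an extension of results from~\cite{Sharf2019b}, so there is no in-paper argument to compare against.

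Your proposal addresses only the final ``Furthermore'' equivalence, and for that part your argument is correct. The necessity direction is immediate, as you note. For sufficiency, both the Schur-complement argument (block-decompose against $\ker(M^\top)$ and its orthogonal complement, then drive $c$ large) and the compactness alternative are valid and standard; either would be acceptable. One minor remark: you implicitly assume $\ker(M^\top)\neq\{0\}$ when writing $R_{11}>0$, but the argument degenerates cleanly when $\ker(M^\top)=\{0\}$ since then $MM^\top$ is already positive definite and the kernel condition is vacuous.

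You do not address the first two claims (that positive semidefiniteness, respectively positive definiteness, of $R+M\diag(\gamma)M^\top$ implies passivity, respectively output-strict passivity, of $\tilde\Sigma$). Those require the passivity-index machinery and the specific interconnection in Fig.~\ref{fig.passify}, and the paper defers them to the cited reference rather than proving them here. If your intent was only to supply the algebraic equivalence, that is complete; if a full proof is expected, the passivity-theoretic parts remain to be filled in from~\cite{Sharf2019b}.
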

This result shows that for a given network matrix $M$, it may not even be possible to guarantee a network passivation scheme that ensures output-strict passivity of $\tilde{\Sigma}$.  At the same time, it hints that for a given set  of passivity indices $\rho_i$, a change to the network matrix $M$ may allow for output-strict passivation.  This result also shows that for a \emph{full-rank} matrix $MM^\top$, it will always be possible find a single gain $\gamma$ such that $R+\gamma MM^\top>0$.

With this setup, we can now motivate the study of the \emph{structural rank} of the interconnection matrix $M$.  For a matrix $M$ with a given sparsity pattern, how many of its entries can be removed, corresponding to compromising the network connection between an agent and controller, before the matrix loses rank.  In the case where the network is being used to also passify the agents, this loss of rank may lead to the loss of passivity of $\tilde{\Sigma}$, thereby destroying the convergence guarantees of the network system $(\tilde{\Sigma}, \Pi,M)$. To illustrate this, we present a brief example.
\begin{figure}[!b]
    \centering
    \subfloat[]{
   
 \begin{tikzpicture}[scale = 0.25]
 \matrix [{matrix of math nodes}, column sep=5pt, row sep=1pt,
     left delimiter=(,right delimiter=),ampersand replacement=\&] (m)
 {
  1 \& 1 \& 1 \& 0 \& 0 \& 0   \\
  0 \& 1 \& 1  \& 1 \& 0 \& 0\\
  0 \& 0 \& 1 \& 1 \& 1 \& 0\\
  0 \& 0 \& 0 \& 1 \& 1 \& 1 \\ 
 };
 \node[fit=(m-1-1) (m-1-3)] {};
 \node[fit=(m-2-4) (m-2-4)] {};
 \node[fit=(m-3-5) (m-3-5)] {};
 \node at (0,-6.45){};
 \end{tikzpicture} 
 }\qquad
     \subfloat[]{

 \includegraphics{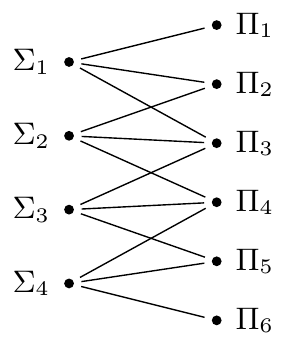}
} 
    \caption{The network matrix $M$ in (a) and its graph representation in (b). Each column of $M$ represents a controller $\Pi_e$ while each row  corresponds to a system $\Sigma_i$.}
    \label{fig:Mexample}
\end{figure}

\begin{example}\normalfont
We consider an ensemble of $n=4$ identical, but unstable plants, with dynamics of each agent described by the SISO transfer function $\Sigma_i(s)=(s+0.5)/(s-1)$ for $i\in \{1,\ldots,4\}$. It can be verified that the agents are output passive-short, with $\rho = -2$.\footnote{The passivity index can be computed, for example, in MATLAB using the command \texttt{getPassiveIndex}.}  The agents are to be controlled according to the architecture in Fig. \ref{fig.network} with the network matrix $M$, illustrated in Fig. \ref{fig:Mexample} below. Since $M$ is full rank, according to Lemma \ref{cor.weights}, the ensemble can be passified (and stabilized) over the network using the architecture in Fig. \ref{fig.passify} and with gain $\gamma > 3.4142$ (found using, for example, semi-defnite programming).  

Consider now a scenario where an attacker successfully disables controllers $\Pi_1$ and $\Pi_6$ (corresponding to nulling columns $1$ and $6$ in $M$).  Even whith such an attack, the matrix $M$ maintains full column rank and can still be passified, now with a gain of $\gamma > 13.7082$.  On the other hand, if in addition the connection between $\Sigma_3$ and $\Pi_3$ is severed (i.e., changing entry $M_{13}$ to $0$), then $M$ loses rank and it is no longer possible too passify the system over the network. Consequently, the architecture of Fig. \ref{fig.network} can not be used to control the ensemble and the attacker was successful in disabling the system. \qed
\end{example}

\subsection{Problem formulation and contributions}\label{sec:problemformulation}

In this subsection, we formulate the core problems addressed in this paper.  We start by introducing the notions of sparsity patterns and their rank.

A \emph{sparsity pattern} $\cS(n,m)$ in $\R^{n \times m}$ (or simply $\cS$ if $(n,m)$ is clear from the context) is a vector subspace that admits a basis consisting only of matrices $E_{ij}$'s, i.e., matrices with $1$ on the $ij$th entry and $0$ elsewhere. Such a vector space is thus fully determined by the pairs $(i,j)$, which indicate the entries of matrices in $\cS$ that are not always zero. We denote by $E(\cS)$ the collection of all such pairs, hence $\dim \cS = |E(\cS)|$. We refer to the entries of $\cS(n,m)$ indexed by $E(\cS)$ as $\star$-entries, and the other entries as $0$-entries.

\begin{definition}[Rank of sparsity pattern]
The rank of a sparsity pattern $\cS$, denoted by $\rk \cS$, is the maximal value of the ranks of matrices in $\cS$:
$$
\rk \cS := \max_{A \in \cS}(A).
$$
\end{definition}

It should be clear that $\rk \cS(n,m) \le \min\{n, m\}$. Returning to the example of Section \ref{subsec.motivation}, we are interested in finding sparse matrices $M$ such that $MM^\top $ is full rank (i.e., rank $n$).  Thus, we assume in the sequel that $m \geq n$. 

The set of sparsity patterns of the same parameters $(n,m)$ admits a natural partial order:
\begin{definition}[Partial order on sparsity patterns]
Given patterns $\cS(n,m)$ and $\cS'(n,m)$, we write $\cS' \succeq \cS$ if $E(\cS') \supseteq E(\cS)$ and $\cS' \succ \cS$ if $E(\cS') \supsetneq E(\cS)$. 
\end{definition}

We now precisely  define the notions of resilience studied in this paper:

\setcounter{definition}{3}
\begin{subdefinition}[Resilience]\label{def:res}
Given positive integers $n$ and $m$ with $m\ge n$, a sparsity pattern $\cS(n,m)$ of rank $n$ is \emph{exactly $k$-resilient}, for $0 \leq k\leq |E(\cS)|$, if the following hold: 
\begin{enumerate}
    \item All patterns $\cS'\prec \cS$ with $|E(\cS')| \geq |E(\cS)|-k $ are of rank~$n$; 
    \item There exists an $\cS'\prec \cS$ with $|E(\cS')| = |E(\cS)|-k-1 $ whose rank is less than $n$.
\end{enumerate}
We denote by $\rs \cS$ the degree of resilience of $\cS$.
\end{subdefinition}
Note that by the above definition, a sparsity pattern $\cS$ is exactly $0$-resilient if its rank is~$n$ and, moreover, all patterns $\cS' \prec \cS$ have ranks strictly lower than $n$.

When expressing $\cS$ as a direct sum  
of sparsity patterns, it is clear that if {\em any} of the summand is of rank $n$, then so is $\cS$. Following this fact, we introduce the following definition:

\begin{subdefinition}[Strong resilience]\label{def:strongres}
Given positive integers $n$ and $m$ with $m\ge n$, a sparsity pattern $\cS(n,m)$ of rank $n$ is {\em exactly  strongly $k$-resilient}, for $k\geq 0$, if it contains a {\em direct sum}\footnote{The {\em direct} sum  $\cS'\oplus \cS''$, for $\cS', \cS''\subseteq \cS$, is well defined only if $\cS' \cap \cS'' =\{0\}$.} of $(k+1)$, but not $(k + 2)$, sparsity patterns each of which is $0$-resilient.
We denote by $\srs \cS$ the degree of strong resilience of $\cS$. 
\end{subdefinition}

On occasions, we will deal with sparsity patterns $\cS(n,m)$ that are not full rank, i.e., $\rk \cS(n,m)<n$. By convention, we set
\begin{equation}\label{eq:negativesrs} 
\mrs \cS(n,m):=-1 \mbox{ if } \rk
\cS(n,m)<n.
\end{equation}

Throughout this paper, we shall always consider the {\em exact} degree of (strong) resilience of a sparsity pattern. Thus, for convenience, we will omit ``exact'' in the sequel if there is no confusion.   

By the arguments outlined  before Def.~\ref{def:strongres}, if a sparsity pattern is strongly $k$-resilient, then it is at least  $k$-resilient. 
However, the converse is not true: there exist $k$-resilient patterns which cannot be expressed as a direct sum of $(k+1)$ patterns which are $0$-resilient 
(an example is given in Fig.~\ref{fig:weakvsstrong} below).  
Nevertheless, we will show in Section~\ref{ssec:resandstrongres} that the gap between the two notions does not have any impact on the minimal dimensions of patterns meeting either definition. Specifically,   
if $d_k$ is the {\em minimal dimension} of a $k$-resilient pattern,
$$ 
d_k := \min_{\cS\,: \,\rs \cS =k} |E(\cS)|,
$$ 
then there exists a pattern of dimension $d_k$ which is {\em strongly $k$-resilient}. 

Standing from the perspective of a system designer, we pose the following questions: 
\begin{enumerate}
\item[P1:] Given a sparsity pattern $\cS$, what is its degree of (strong) resilience? 
\item[P2:] Given a sparsity pattern $\cS$, what is the least number of $\star$-entries one should add to obtain a degree of (strong-)resilience $k^*$? This problem can be expressed as follows: 
    \begin{equation}
        \min |E(\cS^*)| \mbox{ s.t. }   \cS^* \succeq \cS \mbox{ with  } \mrs \cS^* =k^*.
    \end{equation}
\item[P3:] Given a sparsity pattern $\cS$, what is the largest degree of (strong-)resilience we can achieve by adding $p$ $\star$-entries? This problem can be expressed as follows:
    \begin{equation}
        \max \mrs \cS^*  \mbox{ s.t. }  \cS^* \succeq \cS \mbox{ with  } |E(\cS^*)|=|E(\cS)|+p.
    \end{equation}
\end{enumerate}

We provide solutions to the three problems stated above, as well as relevant polynomial-time algorithms,  with a focus on strong resilience. The solutions are formulated in Theorem~\ref{th:maxflowgood}, Theorem~\ref{thm:solutiontop2}, and Theorem~\ref{thm:solutiontop3}, respectively.

The first step in our analysis is to assign a bipartite graph to a sparsity pattern, and to relate the full-rank property, and its resilience,  to the existence of matchings in this graph. This is done in Sec.~\ref{sec:patternstographs}. We then proceed toward the first  result,  Theorem~\ref{thm:exactkmatching}, in which relying on a result of K\"onig~\cite{konig1916graphen} to characterize the bipartite graphs corresponding to strongly $k$-resilient patterns. This is done in Sec.~\ref{ssec:resandstrongres}.
Relying on Theorem~\ref{thm:exactkmatching}, we then translate the three problems formulated above into problems about \emph{max-flows} over graphs. In more details,  we first create several variations on the bipartite graphs associated with a sparsity pattern, by adding source and target nodes, turning undirected edges into directed ones, and appropriately assigning edge- and node-capacities. We then introduce several max-flow problems defined on these modified bipartite graphs and, moreover, prove that integral solutions to these max-flows problems provide  solutions to the original problems P1-P3. Finally, we demonstrate that these max-flow problems can be solved using standard algorithms in polynomial time.

\section{Bipartite graphs, matchings, and resilience}\label{sec:patternstographs}

\subsection{Background on graph theory and flows}\label{ssec:background}
We introduce the necessary background and  notations about graph theory and related flow problems. We will be concerned in this paper with bipartite graphs, i.e., graphs which admit a partition of their node set into two disjoint components with the property that nodes in the same components share no edge.

Denote by $G(n,m)=(V_\alpha \cup V_\beta, E)$ an undirected bipartite graph on $(n + m)$ nodes: by convention, there are $n$ \emph{left-nodes} denoted by $\alpha_1,\ldots, \alpha_n$ and $m$ \emph{right-nodes} denoted by $\beta_1,\ldots, \beta_m$. On occasions, we will write $G$ by omitting the arguments $(n,m)$ if it is clear from the context.   
Each edge of $G(n,m)$ connects a left-node with a right-node. An edge in $G(n,m)$ is thus denoted by $(\alpha_i,\beta_j)$. We say that $G'=(V',E')$ is a subgraph of $G$ if $V' \subseteq V_\alpha \cup V_\beta$ and $E' \subseteq E$. Given a node $\alpha$ in $G$, we denote by $\deg(\alpha, G')$ the degree of $\alpha$ relative to $G'$, defined as the number of edges in $E'$ incident to $\alpha$ (equivalently, the number of neighbors of $\alpha$ in $G'$). We will also consider below {\em directed} bipartite graphs; we denote  the {\em directed} edge from $\alpha_i$ to $\beta_j$ by $\alpha_i\beta_j$.

Recall that a {\em matching} in the graph $G(n,m)$ is a set of edges so that no two distinct edges are incident to the same node. 
For $n = m$,  a {\em perfect matching} $P$ in $G(n,n)$ is a set of $n$ edges such that each node of $G(n,n)$ is incident to exactly one of these $n$ edges. 
For the general case $m \neq n$, we introduce the following definition: 

\begin{definition}[Left-perfect matchings]\label{def:lrperfectmatchings}
A {\em left-perfect matching} in a bipartite graph $G(n,m)=(V_\alpha\cup V_\beta,E)$, with $m\ge n$, 
is a set of $n$ edges in $E$ so that no two distinct edges are incident to the same node. 
\end{definition}

Equivalently, $G(n,m)$, for $n\leq m$, admits a left-perfect matching if there exist $n$ distinct right nodes $\beta_{i_1},\ldots, \beta_{i_n}$ such that the subgraph $G'(n,n)$ induced by the left-nodes $V_\alpha$ and $\{\beta_{i_1},\ldots, \beta_{i_n}\}$ has a perfect matching. 
We say that two matchings $P_1$ and $P_2$ of $G$ are {\em disjoint} if $P_1 \cap P_2 = \varnothing$.

Let $\vec G = (V, \vec E)$ be an arbitrary directed graph, with two special nodes $s,t \in V$, termed the {\em source} and {\em target} nodes, respectively. The source node has no incoming edges and the target has no outgoing edges. 
A {\em capacity} on $\vec G$ is a function $c: \vec E\to \R_{\ge 0}$.  
Given the capacity, a {\em flow}  on $\vec G$ is a function $f: \vec E\to \R_{\ge 0}$ such that

\begin{enumerate}
\item $f(e) \leq c(e)$ for all $e\in \vec E$;
\item the following balance condition is satisfied at all nodes $v \in V-\{s,t\}$:
\begin{equation}\label{eq:flowcond}
\sum_{u:uv\in \vec E} f(uv) = \sum_{w:vw\in \vec E}f(vw). 
\end{equation}
\end{enumerate}
The {\em value} of the flow $f$ is defined as 
\begin{equation}\label{eq:defvalueofflow}
|f|:= \sum_{v:sv\in \vec E} f(sv) = \sum_{v:vt\in \vec E}f(vt).
\end{equation}
We denote by 
$F_c$ the set of all flows on $\vec G$ with  capacity  function $c$. 
The so-called {\em max-flow problem} \cite{Rockafellar1998} is the optimization problem formulated as follows: 
$$\max_{f \in F_c} |f|.$$ 
It is well known that finding a solution $f^*$ to the above optimization problem can be done in polynomial time using, e.g., the Ford-Fulkerson algorithm~\cite{ford2015flows}. 
Note that a solution to the max-flow problem is not necessarily integer-valued, i.e., there may exist edges $e$ such that $f^*(e)$ are not integers, even if $c$ is integer-valued. 
However, if $c$ is integer-valued (which will be the case in this paper),  then the output of the Ford-Fulkerson algorithm initialized at an integer-valued flow is  integer-valued as well, and thus provides an {\em integer-valued} maximum flow \cite{Rockafellar1998}. This statement is referred to as the {\em integrality theorem}.

A fundamental result in the study of max-flow problems is the \emph{max-flow min-cut theorem}, which we briefly describe here. 
To this end, we recall the definition of a {\em cut} in the digraph $\vec G = (V, \vec E)$ with the capacity function $c$: 
An $s$-$t$ cut $(S,T)$ in $\vec G$ is a partition of the node set of $\vec G$ into two disjoint sets $S \ni s$ and $T \ni t$. We denote the set of all $s$-$t$ cuts in $\vec G$ as $\cal C$. For a given cut $(S,T) \in \cal C$, we let 
$$\vec E_{(S,T)}:= \{v_iv_j\in \vec E \mid v_i\in S, v_j \in T\}.$$
Then, the {\em capacity} of the cut $(S,T)$ is defined as 
$$
c(S, T):= \sum_{e\in \vec E_{(S,T)}} c(e).
$$
The min-cut problem is then formulated as follows
$$
\min_{(S,T)\in {\cal C}} c(S,T).
$$
The max-flow min-cut Theorem~\cite{brualdi2010introductory} says the following: 

\begin{lemma}\label{lem:maxflowmincut}
 Given a digraph $\vec G$ with source $s$ and target $t$ and capacity function $c$,  let $F_c$ be the set of corresponding flow maps on $\vec G$ and $\cal C$ the set of $s$-$t$ cuts in $\vec G$. Then,  
 $$
 \max_{f \in F_c} |f| = \min_{(S,T)\in {\cal C}} c(S,T).
 $$
\end{lemma}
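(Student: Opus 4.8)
The plan is to establish the two inequalities $\max_{f}|f| \le \min_{(S,T)} c(S,T)$ and $\max_{f}|f| \ge \min_{(S,T)} c(S,T)$ separately. The first (weak duality) is a short computation valid for \emph{any} flow and \emph{any} cut; the second (strong duality) requires exhibiting a specific flow and a specific cut whose values coincide, and this is where the real work lies. Throughout I would use the notation $\vec E_{(T,S)} := \{v_iv_j \in \vec E \mid v_i \in T,\, v_j \in S\}$ for the edges crossing the cut in the reverse direction.

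First I would prove weak duality. Fix a flow $f \in F_c$ and a cut $(S,T) \in \mathcal{C}$. Summing the balance condition \eqref{eq:flowcond} over all $v \in S \setminus \{s\}$ and adjoining the source identity \eqref{eq:defvalueofflow}, every edge internal to $S$ contributes once with a $+$ sign and once with a $-$ sign and hence cancels, leaving
$$ |f| = \sum_{e \in \vec E_{(S,T)}} f(e) - \sum_{e \in \vec E_{(T,S)}} f(e). $$
Since $0 \le f(e) \le c(e)$ for every edge, dropping the nonnegative reverse term and bounding the forward term by capacity gives $|f| \le \sum_{e \in \vec E_{(S,T)}} c(e) = c(S,T)$. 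As $f$ and $(S,T)$ were arbitrary, $\max_f |f| \le \min_{(S,T)} c(S,T)$.

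For strong duality I would first note that a maximizing flow $f^*$ exists: $F_c$ is nonempty (the zero flow belongs to it) and is a compact polytope, being cut out by the finitely many linear constraints $0\le f(e)\le c(e)$ together with the linear balance equations, while $f \mapsto |f|$ is continuous, so the maximum is attained. Given $f^*$, I would form the \emph{residual graph} $\vec G_{f^*}$, whose arcs are the forward arcs $uv$ with $f^*(uv) < c(uv)$ together with a reverse arc $vu$ for each $uv \in \vec E$ with $f^*(uv) > 0$. Let $S$ be the set of nodes reachable from $s$ in $\vec G_{f^*}$ and set $T := V \setminus S$. The decisive point is that $t \notin S$: a directed $s$-$t$ path in $\vec G_{f^*}$ would be an augmenting path along which $f^*$ could be increased by the (strictly positive) minimum residual capacity along it, contradicting maximality of $f^*$. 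Hence $(S,T)$ is a genuine $s$-$t$ cut. By the definition of $S$ no residual arc leaves $S$, which forces $f^*(e) = c(e)$ on every $e \in \vec E_{(S,T)}$ (otherwise a forward residual arc would escape $S$) and $f^*(e) = 0$ on every $e \in \vec E_{(T,S)}$ (otherwise a reverse residual arc would escape $S$). Substituting these into the flow-across-the-cut identity from the previous paragraph yields $|f^*| = c(S,T)$, and combined with weak duality this proves the theorem.

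The main obstacle is the strong-duality step, and within it the augmenting-path argument: one must verify carefully that maximality of $f^*$ precludes any $s$-$t$ path in the residual graph (so that $t \in T$), and that the reachable-set cut saturates exactly the forward crossing edges while emptying the backward ones. A secondary subtlety is the existence of $f^*$ for real-valued capacities---the Ford-Fulkerson algorithm need not terminate without rationality---which I would handle through the compactness argument above rather than by algorithmic termination; for the integer-valued capacities actually used in this paper, the integrality theorem additionally guarantees that $f^*$, and hence the attained common optimum, may be taken integral.
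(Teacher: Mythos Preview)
Your proof is correct and is the standard argument for the max-flow min-cut theorem. Note, however, that the paper does not supply its own proof of this lemma: it is stated in the background subsection and simply attributed to the literature (Brualdi's textbook). So there is no ``paper's proof'' to compare against; you have reconstructed the classical weak-duality-plus-residual-graph argument, which is presumably what any cited reference would contain.
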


\subsection{Graph theoretic view on (strong) resilience}

To proceed, we establish some standard connections between graph theoretic concepts and the pattern properties introduced here. 
First, to a given  sparsity pattern $\cS(n,m)$, we can assign the bipartite graph $G(n,m)=(V_\alpha \cup V_\beta, E)$ on $(n + m)$ nodes  with edge set $E$ given by the rule: 
the $ij$th entry of $\cS$ is  a $\star$ if and only if  $(\alpha_i, \beta_j)$ is an edge in $E$. See Fig.~\ref{fig:matrixtograph} for an illustration. 

\begin{figure}
    \centering
    \subfloat[]{
 \begin{tikzpicture}[scale = 0.82]
 \matrix [{matrix of math nodes}, column sep=5pt, row sep=1pt,
     left delimiter=(,right delimiter=),ampersand replacement=\&] (m)
 {
  \star \& \star \& 0 \& 0 \& 0 \\
  \star \& \star \& 0 \& \star \& 0 \\
  0 \& \star \& \star \& \star \& 0 \\
  0 \& 0 \& 0 \& \star \& \star \\ 
 };
 \node[fit=(m-1-1) (m-1-3)] {};
 \node[fit=(m-2-4) (m-2-4)] {};
 \node[fit=(m-3-5) (m-3-5)] {};
 \node at (0,-2) {};
 \end{tikzpicture} 
 } 
 \qquad \qquad
     \subfloat[]{
 \includegraphics{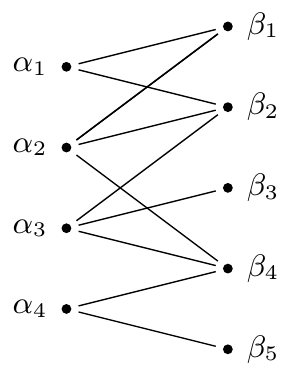}
} 
    \caption{We illustrate the correspondence between a sparsity pattern in (a) and a bipartite graph in (b). The left- and right-nodes are labeled as $\alpha_i$ and $\beta_j$, respectively. 
    A star in the $ij$th entry corresponds to the edge $(\alpha_i,\beta_j)$ in the bipartite graph.}
    \label{fig:matrixtograph}
\end{figure}

Since this representation of sparsity patterns as bipartite graphs is one-to-one,  we also write $\mrs G$ to refer to the degree of (strong) resilience of the corresponding pattern $\cS$.
We now relate $\mrs G$ to perfect matchings of $G$.  
The following result is standard, and we include a proof in the Appendix for completeness.

\begin{lemma}\label{lem:frpm}
A sparsity pattern $\cS(n,m)$  is of rank $n$ if and only if its associated bipartite graph $G(n,m)$ admits a left-perfect matching.  
\end{lemma}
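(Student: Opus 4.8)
The plan is to connect $\rk \cS = n$ with the non-vanishing of an $n\times n$ minor, and then to expand that minor by the Leibniz formula so that a nonzero term reads off a left-perfect matching. Throughout I would treat the $\star$-entries as independent real indeterminates, so that for $A\in\cS$ and any column selection $J=\{j_1<\cdots<j_n\}\subseteq\{1,\ldots,m\}$ the minor $\det A_J$ becomes a polynomial in these indeterminates. Since $\rk\cS=n$ means precisely that some $A\in\cS$ has a nonvanishing $n\times n$ minor, and since a real polynomial in finitely many variables is nonzero at some point if and only if it is not the zero polynomial (in which case it is in fact nonzero on an open dense set, matching the ``open set of full-rank matrices'' phrasing), the condition $\rk\cS=n$ is equivalent to the existence of a column set $J$ for which the polynomial $\det A_J$ is not identically zero.

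For the forward direction, I would assume $\rk\cS=n$ and fix $J$ and $A\in\cS$ with $\det A_J\neq 0$. The Leibniz expansion $\det A_J=\sum_\sigma \mathrm{sgn}(\sigma)\prod_{i=1}^n A_{i,j_{\sigma(i)}}$ must then have at least one nonzero summand, so there is a permutation $\sigma$ with $A_{i,j_{\sigma(i)}}\neq 0$ for every $i$. A nonzero entry can occur only at a $\star$-position, so each $(\alpha_i,\beta_{j_{\sigma(i)}})$ is an edge of $G$; as $i\mapsto j_{\sigma(i)}$ is injective, these $n$ edges are pairwise non-incident and hence form a left-perfect matching.

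For the reverse direction, I would start from a left-perfect matching given by edges $(\alpha_i,\beta_{j_i})$, $i=1,\ldots,n$, with the $j_i$ distinct, and set $J=\{j_1,\ldots,j_n\}$. Each such edge forces position $(i,j_i)$ to be a $\star$-entry, so I may assign it the value $1$ and set every other $\star$-entry to $0$; the resulting $A_J$ is a permutation matrix, whence $\det A_J=\pm 1\neq 0$ and $\rk\cS=n$. A cleaner way to see both directions at once is to note that the monomial $\prod_i A_{i,j_i}$ occurs in the Leibniz expansion of $\det A_J$ with coefficient $\pm 1$, and that distinct permutations produce distinct monomials in the entry-indeterminates, so that term cannot be cancelled and $\det A_J$ is not the zero polynomial.

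The only subtle point, and the step I would treat most carefully, is precisely this last ``no cancellation'' claim: that a permutation term supported entirely on $\star$-entries survives in the determinant polynomial. This is immediate once one observes that the $n!$ products $\prod_i A_{i,\sigma(i)}$ are pairwise distinct monomials when the entries are viewed as independent variables, so each appears with a $\pm 1$ coefficient and no two can combine. With that observation secured, both implications reduce to the polynomial non-vanishing argument of the first paragraph, and the equivalence follows.
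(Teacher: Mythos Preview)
Your proof is correct. Both directions are handled cleanly: the forward direction extracts a left-perfect matching from a nonzero Leibniz term, and the reverse direction either builds an explicit permutation submatrix or invokes the ``distinct monomials cannot cancel'' observation, which is indeed valid once the $\star$-entries are treated as independent indeterminates.

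Your route differs from the paper's. The paper does not argue via the Leibniz expansion; instead it passes through an auxiliary \emph{digraph} on $n$ nodes associated with a square sub-pattern $\cS'(n,n)$, and quotes two known equivalences: (i) $\cS'(n,n)$ contains a full-rank matrix iff this digraph admits a Hamiltonian decomposition, and (ii) the digraph admits a Hamiltonian decomposition iff the bipartite graph $G'(n,n)$ has a perfect matching. It then lifts the resulting perfect matching in $G'(n,n)$ to a left-perfect matching in $G(n,m)$. Your argument is more self-contained and elementary---it avoids the digraph detour and the citations to~\cite{BELABBAS2013981,belabbas_algorithmsparse_2013}---at the cost of repeating a standard determinantal computation. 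The paper's approach, on the other hand, situates the lemma within the structural-systems literature the authors already rely on elsewhere. Either is perfectly adequate for this statement.
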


As an immediate consequence of the above lemma, we can characterize $k$-resilient bipartite graphs  as follows:

\begin{lemma}
 A bipartite graph $G = (V_\alpha\cup V_\beta, E)$ is $k$-resilient if and only if the following hold: 
 \begin{enumerate}
 \item For {\em any} subset $E' \subset E$ with $|E'|=k$, $G'=(V_\alpha\cup V_\beta, E-E')$ contains a left-perfect matching; \item There exists a subset $E'$ with $|E'| = k+1$ such that $G' = (V_\alpha\cup V_\beta, E - E')$ does not contain a left-perfect matching. 
 \end{enumerate}
\end{lemma}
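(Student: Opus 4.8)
The plan is to read off this lemma directly from Definition~\ref{def:res} together with Lemma~\ref{lem:frpm}, the only genuine ingredient being a monotonicity observation about matchings under edge deletion. I set up the dictionary first: removing a set of $\star$-entries from $\cS$ corresponds to removing the matching edge set $E'\subseteq E$ from $G$, so a pattern $\cS'\prec\cS$ is exactly $E(\cS')=E\setminus E'$ for a nonempty $E'$, with $|E(\cS')| = |E|-|E'|$. By Lemma~\ref{lem:frpm}, $\rk\cS'=n$ precisely when $G'=(V_\alpha\cup V_\beta,\,E\setminus E')$ admits a left-perfect matching. This lets me restate both clauses of Definition~\ref{def:res} purely in terms of left-perfect matchings of edge-deleted subgraphs of $G$.

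Next I would record the monotonicity fact: if $E''\supseteq E'$ then $E\setminus E''\subseteq E\setminus E'$, and since a left-perfect matching is a set of $n$ pairwise non-incident edges, any such matching contained in $E\setminus E''$ is automatically contained in $E\setminus E'$. Thus deleting fewer edges can never destroy a left-perfect matching. With this, clause (2) of the lemma is immediate: the requirement in Definition~\ref{def:res}(2) of some $\cS'\prec\cS$ with $|E(\cS')| = |E|-k-1$ and $\rk\cS'<n$ is, verbatim under the dictionary, the existence of $E'$ with $|E'|=k+1$ for which $G'$ has no left-perfect matching.

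The only clause needing an argument is (1), and here the subtlety is that Definition~\ref{def:res}(1) quantifies over all $\cS'\prec\cS$ with $|E(\cS')|\ge |E|-k$, i.e.\ over all $E'$ with $|E'|\le k$, whereas the lemma quantifies only over $E'$ with $|E'|=k$. One implication is trivial since $\{|E'|=k\}\subseteq\{|E'|\le k\}$. For the converse, given any $E'$ with $|E'|<k$, I would enlarge it to some $E''\supseteq E'$ with $|E''|=k$ (feasible because $|E|\ge k$, guaranteed by $k\le|E(\cS)|$), invoke the lemma's clause (1) to get a left-perfect matching of $G\setminus E''$, and transport it to $G\setminus E'$ by monotonicity. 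Taking $E'=\varnothing$ in the same way recovers that $G$ itself has a left-perfect matching, so the ``$\rk\cS=n$'' precondition baked into the definition of $k$-resilience is subsumed by clause (1), making the stated equivalence self-contained. I do not expect a real obstacle here: the content is bookkeeping, and the single point to state carefully is this enlargement-plus-monotonicity step reconciling ``$=k$'' with ``$\le k$,'' including the check that the enlargement is feasible.
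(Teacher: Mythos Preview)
Your proposal is correct and follows exactly the approach the paper intends: the paper gives no proof for this lemma, stating only that it is ``an immediate consequence'' of Lemma~\ref{lem:frpm} and Definition~\ref{def:res}. Your dictionary between subpatterns and edge-deleted subgraphs, together with the monotonicity step reconciling ``$|E'|=k$'' with ``$|E'|\le k$,'' simply spells out what the paper leaves implicit.
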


We can also characterize strongly $k$-resilient bipartite graphs using perfect matchings:   

\begin{lemma}\label{lem:reskpm}
A bipartite graph $G$ is strongly $k$-resilient  if and only if it has exactly $(k+1)$ {\em disjoint} left-perfect matchings.
\end{lemma}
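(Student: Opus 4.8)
The plan is to reduce the statement to a clean dictionary between $0$-resilient sub-patterns of $\cS$ and left-perfect matchings of the associated graph $G$, after which the claimed equivalence becomes almost tautological. The single fact that does all the work is the following.

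\emph{Claim:} A sub-pattern $\cS' \subseteq \cS$ is $0$-resilient if and only if $E(\cS')$ is a left-perfect matching of $G$. To prove it, recall by Lemma~\ref{lem:frpm} that $\cS'$ has rank $n$ iff the subgraph $G'=(V_\alpha\cup V_\beta, E(\cS'))$ admits a left-perfect matching, which forces $|E(\cS')|\ge n$. If, in addition, $\cS'$ is $0$-resilient, then deleting any single $\star$-entry must drop the rank below $n$; but if $|E(\cS')|>n$, then any left-perfect matching $P\subseteq E(\cS')$ omits some edge $e$, and $E(\cS')\setminus\{e\}$ still contains $P$, so $\cS'$ minus that entry still has rank $n$ by Lemma~\ref{lem:frpm}, a contradiction. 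Hence $|E(\cS')|=n$, and the $n$ edges are then exactly the left-perfect matching. Conversely, if $E(\cS')$ is a left-perfect matching, then $\cS'$ has rank $n$, and removing any edge leaves some left-node $\alpha_i$ with no incident edge, so no left-perfect matching survives; by Lemma~\ref{lem:frpm} the rank drops below $n$, i.e.\ $\cS'$ is $0$-resilient.

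Next I would translate the \emph{direct sum} condition of Def.~\ref{def:strongres} into edge-disjointness. Since each sub-pattern is spanned by canonical matrices $E_{ij}$, for sub-patterns $\cS_1,\ldots,\cS_r \subseteq \cS$ the intersection $\cS_a\cap\cS_b$ is spanned by the $E_{ij}$ with $(i,j)\in E(\cS_a)\cap E(\cS_b)$; thus the sum $\cS_1+\cdots+\cS_r$ is a direct sum if and only if the edge sets $E(\cS_1),\ldots,E(\cS_r)$ are pairwise disjoint. Combining this with the Claim, a direct sum of $r$ $0$-resilient sub-patterns contained in $\cS$ corresponds exactly to a family of $r$ pairwise disjoint left-perfect matchings in $G$, and vice versa.

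Unwinding Def.~\ref{def:strongres} with this dictionary finishes the proof: $G$ is strongly $k$-resilient iff $\cS$ contains a direct sum of $k+1$, but not $k+2$, $0$-resilient patterns, which is precisely the statement that $G$ admits $k+1$ pairwise disjoint left-perfect matchings but no $k+2$ such matchings—i.e.\ it has exactly $(k+1)$ disjoint left-perfect matchings. The only delicate point, and the one I expect to be the main obstacle, is the forward direction of the Claim: ruling out a $0$-resilient pattern carrying a $\star$-entry outside a matching. The argument above handles it by exhibiting a surviving matching after deletion, but it is worth stating carefully, since it is exactly what guarantees that $0$-resilient summands are genuine matchings rather than merely edge-minimal-looking rank-$n$ patterns.
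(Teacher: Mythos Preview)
Your overall strategy mirrors the paper's: translate the direct-sum condition of Def.~\ref{def:strongres} into edge-disjointness and relate $0$-resilient sub-patterns to left-perfect matchings. However, your central Claim---that a sub-pattern $\cS'$ is $0$-resilient \emph{if and only if} $E(\cS')$ is a left-perfect matching---overstates what Def.~\ref{def:res} actually gives. For $k=0$, condition~(1) is vacuous and condition~(2) only requires that \emph{some} single deletion drop the rank, not that \emph{every} deletion do so. Concretely, take $n=2$, $m=3$, and $E(\cS')=\{(\alpha_1,\beta_1),(\alpha_2,\beta_2),(\alpha_1,\beta_3)\}$: this pattern has rank~$2$, removing $(\alpha_2,\beta_2)$ drops the rank, so $\cS'$ is exactly $0$-resilient---yet $E(\cS')$ is not a matching. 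The sentence ``deleting any single $\star$-entry must drop the rank below~$n$'' is precisely the unjustified step.

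The paper's proof sidesteps this by using only the two weaker facts that are genuinely available: (i) every left-perfect matching induces a $0$-resilient sub-pattern (your reverse implication, which is fine), and (ii) every $0$-resilient sub-pattern has rank~$n$ and therefore \emph{contains} a left-perfect matching (Lemma~\ref{lem:frpm}). With just these, your final paragraph still goes through: $r$ pairwise edge-disjoint $0$-resilient summands yield $r$ pairwise disjoint left-perfect matchings (pick one matching inside each summand; disjoint summands force the matchings to be disjoint), and conversely $r$ disjoint matchings are themselves $r$ $0$-resilient summands in direct sum. So the repair is simply to drop the forward direction of your Claim and argue the equivalence ``$\cS$ contains a direct sum of $r$ $0$-resilient sub-patterns $\Longleftrightarrow$ $G$ contains $r$ disjoint left-perfect matchings'' from (i) and (ii) alone, exactly as the paper does.
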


\begin{proof}
We first show that if $G$ has exactly $(k+1)$ disjoint left-perfect matchings, it is strongly $k$-resilient. Denote by $P_1,\ldots,P_{k+1}$ the disjoint left-perfect matchings in $G$. By Lemma~\ref{lem:frpm}, the graph induced by each left-perfect matching in $G$ corresponds to a $0$-resilient sub-pattern of $\cS$. Furthermore, since the $(k+1)$ left-perfect matchings are disjoint, the sparsity pattern corresponding to their union is the direct sum of the sub-patterns corresponding to the $P_i$. 
It then follows from Definition~\ref{def:strongres} that $G$ is strongly $k$-resilient.  

We now show that if $G$ is strongly $k$-resilient, then it has exactly $(k+1)$ disjoint left-perfect matchings. First, note that $G$ cannot have more than $(k+1)$ disjoint left-perfect matchings because otherwise, by the above argument, $G$ is at least strongly $(k+1)$-resilient. 
It remains to show that $G$ has at least $(k+1)$ disjoint left-perfect matchings. 
By definition of strong resilience, $\cS$ contains $(k+1)$ subpatterns $\cS_1, \ldots, \cS_{k+1}$  that are $0$-resilient and $\cS_i \cap \cS_j = \{0\}$ for $i\neq j$. 
Owing to the correspondence between sparse patterns and bipartite graphs, to each subpattern corresponds a subgraph of $G$. Denote these subgraphs by $G_1,\ldots, G_{k+1}$. Since each pattern is $0$-resilient, by Lemma~\ref{lem:frpm}, each $G_i$ contains at least one left-perfect matching $P_i$.  
Since $\cS_i \cap \cS_j=\{0\}$ for $i \neq j$, it follows that $G_i$ and $G_j$ are edge-wise disjoint and, hence, $P_i$ and $P_j$ are disjoint as well. We have thus shown that $G$ has {at least} $(k+1)$ disjoint left-perfect matchings, which concludes the proof.
\end{proof}

\section{Main Results}\label{sec:mainresults}

\subsection{On $k$- and strong $k$-resilience}\label{ssec:resandstrongres}

From Lemma~\ref{lem:reskpm}, we know that a strongly $k$-resilient pattern is associated to a bipartite graph that contains exactly $k$ disjoint left-perfect matchings.  To better understand strong resilience, we characterize graphs that can be obtained as unions of disjoint left-perfect matchings:

\begin{theorem}\label{thm:exactkmatching}
A bipartite graph $G(n,m)$, for $m\ge n$, is a union of $k$, for $1\leq k \leq m$, disjoint left-perfect matchings if and only if the following hold:
\begin{enumerate}
    \item The degree of each left-node is exactly $k$;
    \item The degree of each right-node is less than or equal to~$k$.
\end{enumerate}
\end{theorem}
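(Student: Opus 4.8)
The plan is to prove the two implications separately, with the forward (necessity) direction obtained by directly unwinding the definition of a left-perfect matching, and the reverse (sufficiency) direction resting on K\"onig's edge-coloring theorem for bipartite graphs~\cite{konig1916graphen}.

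For \emph{necessity}, suppose $G(n,m) = P_1 \cup \cdots \cup P_k$ with the $P_i$ pairwise disjoint left-perfect matchings. By Definition~\ref{def:lrperfectmatchings} each $P_i$ is a set of $n$ edges no two of which share a node, so $P_i$ is incident to every left-node exactly once and to every right-node at most once. Since the $P_i$ are edge-disjoint, each left-node $\alpha_i$ accrues exactly one edge from each $P_j$, giving $\deg(\alpha_i, G) = k$, which is condition (1); and each right-node $\beta_j$ receives at most one edge from each $P_j$, giving $\deg(\beta_j, G) \le k$, which is condition (2).

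For \emph{sufficiency}, assume (1) and (2). Then the maximum degree of $G$ is exactly $\Delta(G) = k$: it is attained at every left-node by (1) and not exceeded anywhere by (2). K\"onig's edge-coloring theorem states that a bipartite graph admits a proper edge-coloring using exactly $\Delta(G)$ colors, i.e.\ one can partition $E$ into $k$ matchings $M_1,\ldots,M_k$ such that no two edges sharing a node lie in the same $M_j$. The crucial observation is that each $M_j$ is then a \emph{left}-perfect matching: fixing a left-node $\alpha_i$, condition (1) gives it exactly $k$ incident edges, and properness forces these $k$ edges to carry $k$ distinct colors, hence all $k$ colors; thus $\alpha_i$ is incident to exactly one edge of each color class. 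Every $M_j$ therefore meets each left-node exactly once (and is nonempty whenever $n\ge 1$), so each $M_j$ is a left-perfect matching, the $M_j$ are disjoint by construction, and $G = M_1 \cup \cdots \cup M_k$ exhibits $G$ as a union of $k$ disjoint left-perfect matchings.

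I expect the main obstacle to be resisting a tempting but fragile inductive ``peel off one left-perfect matching at a time'' argument. After deleting a single left-perfect matching every left-node drops to degree $k-1$, but a right-node of degree $k$ that the matching happened to miss would retain degree $k > k-1$, breaking hypothesis (2) and stalling the induction. It is exactly this global balancing that K\"onig's theorem supplies for free, which is why routing the proof through proper edge-coloring rather than greedy extraction is the clean path. I would close by noting that, combined with Lemma~\ref{lem:reskpm}, this degree-based characterization of unions of disjoint left-perfect matchings is the combinatorial engine driving the max-flow formulations of the subsequent subsections.
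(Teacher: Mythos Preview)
Your proof is correct and follows essentially the same approach as the paper: necessity by direct degree counting, sufficiency via K\"onig's edge-coloring theorem to partition $E$ into $k$ matchings. The only cosmetic difference is in verifying that each color class is left-perfect---you use a local pigeonhole at each $\alpha_i$ (its $k$ edges must receive all $k$ colors), whereas the paper uses a global count ($|E|=kn$ and each matching has size at most $n$, so all have size exactly $n$); both are equally valid.
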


Note that the degree of each right node of $G(n,m)$ is at most $n$, so for $k\ge n $, item (2) of Theorem~\ref{thm:exactkmatching} holds trivially. 
It is not too hard to see the bipartite graphs characterized by Theorem~\ref{thm:exactkmatching} exist for every $k = 1,\ldots, m$. 

\begin{proof}
We first establish the necessity of the two items. 
The necessity of item (1) is obvious. For item (2), assume, to the contrary, that there is at least one node in $V_\beta$, say $v_{\beta_j}$, with degree larger than $k$. 
Since each node of $V_\beta$ is incident to at {\em most one} edge in a left-perfect matching, after removing the $k$ disjoint perfect matchings of $G(n,m)$,  $v_{\beta_j}$ will have degree strictly larger than $0$ and thus $G(n,m)$ is not a union of $k$ left-perfect matchings.

We next establish the sufficiency of the two items. 
The proof relies on the use of K{\"o}nig's Line Coloring Theorem~\cite[Theorem 1.4.18]{lovasz2009matching}, which can be equivalently stated as follows: Let $G=(V_\alpha\cup V_\beta,E)$ be an arbitrary bipartite graph, and $\Delta(G)$ be the maximal degree of $G$, i.e., $\Delta(G):= \max_{v\in V_\alpha \cup V_\beta}\deg(v)$. Further, let $\chi(G)$ be the minimal number $\ell$ of {\em disjoint} matchings $P_1,\ldots,P_\ell$ in $G$ such that $E = \cup_{i = 1}^\ell P_i$. Then, $\chi(G) = \Delta(G)$. 
Applying K{\"o}nig's Line Coloring Theorem to $G(n,m)$ as in the theorem statement, we obtain that $E$ is a union of $k$ disjoint matchings $P_1,\ldots,P_k$. In order to show that these matchings are all left-perfect matchings, it suffices to show that they are all of cardinality $n$. Indeed, since $n\leq m$, any matching of cardinality $n$ is necessarily left-perfect. Note that $|E| = \sum_{i = 1}^k |P_i|$ and, by the hypothesis on $G(n,m)$, $|E| = kn$. Finally, since $G$ is bipartite, the cardinality of {\em any} matching in $G$ cannot exceed $n$. We conclude that all matchings $P_1,\ldots, P_k$ have cardinality $n$ and are thus left-perfect matchings. 
\end{proof}

The following result is a corollary of Theorem~\ref{thm:exactkmatching}: 

\begin{corollary}\label{cor:littlebc}
The following two statements hold:
\begin{enumerate}
\item For any given $k = 1,\ldots, m-1$, the minimal number of edges needed for $G(n,m)$, with $m\geq n$, to be  $k$-resilient (or strongly $k$-resilient) is $(k+1)n$. 
\item  Given a pair of positive integers $(n,m)$ with $m \ge n$, the maximal degree of resilience (or strong resilience) of a bipartite graph $G(n,m)$ is $(m-1)$. 
\end{enumerate}
\end{corollary}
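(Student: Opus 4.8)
The plan is to deduce both items from Theorem~\ref{thm:exactkmatching} together with the matching characterization of strong resilience in Lemma~\ref{lem:reskpm}, handling the two resilience notions in parallel and treating each claimed quantity as a matched lower bound plus an explicit construction. Throughout I will use the observation, recorded before Def.~\ref{def:strongres}, that strong $k$-resilience implies $k$-resilience, so that a single construction can serve both notions.

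For the lower bound in item~(1) I would argue separately for the two notions. For strong resilience, Lemma~\ref{lem:reskpm} says a strongly $k$-resilient $G$ carries $(k+1)$ pairwise disjoint left-perfect matchings; since each such matching consists of exactly $n$ edges and they share none, their union already contributes $(k+1)n$ distinct edges, so $|E|\ge (k+1)n$. For (weak) $k$-resilience I would instead look at left-node degrees: if some $\alpha_i$ had degree at most $k$, deleting all edges incident to $\alpha_i$ removes at most $k$ edges and leaves $\alpha_i$ unmatchable, so by Lemma~\ref{lem:frpm} no left-perfect matching survives, contradicting item~(1) of Def.~\ref{def:res}. Hence every left-node has degree at least $k+1$, and summing over the $n$ left-nodes (each edge being incident to exactly one of them) again gives $|E|\ge (k+1)n$.

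For achievability in item~(1), I would invoke Theorem~\ref{thm:exactkmatching} (and the remark following it) to produce, for $1 \le k+1 \le m$, a graph $G(n,m)$ that is a union of exactly $(k+1)$ disjoint left-perfect matchings; such a graph has each left-node of degree $k+1$ and therefore exactly $(k+1)n$ edges. I then need to check this $G$ realizes the resilience degree exactly. It has at least $(k+1)$ disjoint left-perfect matchings, and it cannot have $(k+2)$ of them, since that would require $(k+2)n > (k+1)n$ edges; so by Lemma~\ref{lem:reskpm} it is strongly $k$-resilient, hence at least $k$-resilient. For the matching upper bound on the weak degree, deleting the $k+1$ edges at any single left-node destroys all left-perfect matchings, so $G$ fails to be $(k+1)$-resilient. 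Thus $\rs G = k$ and $\srs G = k$, establishing that $(k+1)n$ is attained for both notions. The constraint $k \le m-1$ enters precisely here, since the construction requires $k+1 \le m$.

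Item~(2) then follows quickly from the same ingredients. Since every left-node has degree at most $m$, the weak lower-bound argument above forces $k+1 \le m$, giving $\rs G \le m-1$; and since disjoint left-perfect matchings assign a fixed left-node to distinct right-nodes, their number is bounded by the degree $\le m$ of that node, so by Lemma~\ref{lem:reskpm} we get $\srs G \le m-1$. Taking $k = m-1$ in item~(1) produces a graph --- namely the complete bipartite graph $K_{n,m}$ --- with $\rs G = \srs G = m-1$, showing the bound is sharp. I expect the only genuinely delicate step to be the exactness verification in item~(1): one must both rule out a $(k+2)$-nd disjoint matching via the edge count and exhibit the edge set at a single left-node as a size-$(k+1)$ deletion that kills every left-perfect matching, so that the constructed graph is neither strongly nor weakly more than $k$-resilient.
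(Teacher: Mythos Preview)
Your proposal is correct and follows essentially the same approach as the paper: the left-node degree argument for the lower bound, Theorem~\ref{thm:exactkmatching} for the construction, and Lemma~\ref{lem:reskpm} to translate between disjoint matchings and strong resilience. You are in fact more explicit than the paper about verifying that the constructed graph has resilience \emph{exactly} $k$ (via the edge count ruling out a $(k+2)$-nd disjoint matching, and the single-node deletion ruling out weak $(k+1)$-resilience), whereas the paper simply asserts item~(1) as ``an immediate consequence'' of the degree bound and Theorem~\ref{thm:exactkmatching}.
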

\begin{proof}
We first establish the fact that if a bipartite graph $G(n,m)$ is $k$-resilient, then it has at least $(k+1)n$ edges. To see this, recall that by Lemma~\ref{lem:frpm}, $G(n,m)$ is $k$-resilient if, after removing $k$ edges, the remaining graph still admits a left-perfect matching. Hence, the degree of each left node has to be at least $(k+1)$ because otherwise, such node can be disconnected from the others by the removal of $k$ edges incident to it. Since $G(n,m)$ is bipartite, this proves the claim.
Item 1 is then an immediate consequence of the above fact and Theorem~\ref{thm:exactkmatching}. 

We now prove item 2. To consider maximal degree of (strong) resilience,  it suffices to let $G(n,m)$ be the complete bipartite graph (owing to the monotonicity of resilience with respect to adding edges). In this case, we show that the degree of (strong) resilience of $G(n,m)$ is $(m-1)$.  
On one hand, the degree of every left node is $m$. From the fact established at the beginning of the proof, we have that $G(n,m)$ is at most $(m-1)$-resilient. On the other hand, by Theorem~\ref{thm:exactkmatching}, $G(n,m)$ is a union of $m$ disjoint left-perfect matchings. 
Thus, by Lemma~\ref{lem:reskpm}, $G(n,m)$ is strongly $(m-1)$-resilient.  
\end{proof}

The above corollary says that $k-$ and strongly $k$-resilience require the same minimal number of edges, and that the maximal degrees of resilience and of strong resilience one can achieve for a given $(n,m)$ are also the same. Nevertheless, they are distinct notions: strong $k$-resilience is strictly stronger than $k$-resilience.  
We provide an example in Fig.~\ref{fig:weakvsstrong} where a graph that is $1$-resilient but strongly $0$-resilient is depicted.

\begin{figure}[ht!]
    \centering
    \subfloat[]{
    \includegraphics{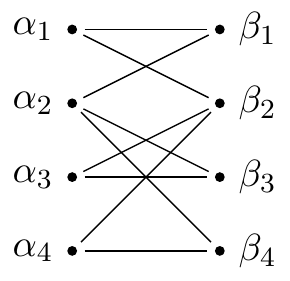}
}\qquad \qquad
\subfloat[]{
\includegraphics{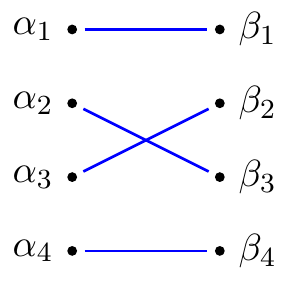}
}\\
%c

\subfloat[]{
  \includegraphics{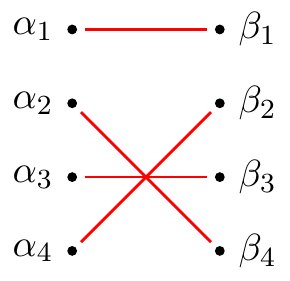}
}\qquad\qquad
\subfloat[]{
 \includegraphics{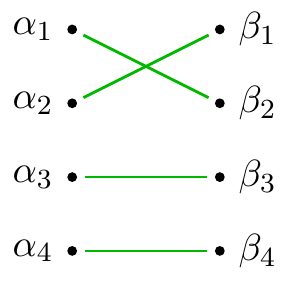}
}
    \caption{The graph in (a) contains three distinct (but not pairwise disjoint) perfect matchings, depicted in (b)-(d). The intersection of these three perfect matchings is the empty set, i.e., there is no common edge to all these matchings. Hence, the graph in (a) is at least $1$-resilient. Moreover, since $ \deg(\alpha_i)=\deg(\beta_i) =2$ for $i = 1,3,4$, the graph in (a) is exactly $1$-resilient. However, since the pairwise intersections of the matchings is not empty,  it is not {\em strongly} $1$-resilient.}
    \label{fig:weakvsstrong}
\end{figure}

In the sequel, we will mostly focus on {\em strong} resilience. The main reason for this is the characterization provided by Theorem~\ref{thm:exactkmatching}, which we can leverage to obtain provable solutions to problems P1-P3. An equivalent characterization for resilience appears harder to obtain.  While the previous example shows that resilience can be strictly weaker than strong resilience, Corollary~\ref{cor:littlebc} shows that the two notions are interchangeable when the number of edges used (which can be viewed as resources deployed by the designer) is to be minimized. %both notions have the same cost. 

\subsection{Solution to Problem P1}\label{ssec:maxflow}

In this section, we show how to determine the degree of strong resilience of a bipartite graph $G(n,m)$ for $n\le m$; i.e., we provide a solution to Problem P1. The solution is constructive, in the sense that we also exhibit a set of edges which is a union of disjoint left-perfect matchings, and can be obtained in polynomial-time. 
This is done by translating the problem into a max-flow problem and appealing to Theorem~\ref{thm:exactkmatching}. 

We start with the following definition, which takes a bipartite graph $G$ and a nonnegative integer $\ell$ and produces a directed version of $G$, denoted by $\bar G$, and a capacity function defined on the edge set of $\bar G$:  

\begin{definition}\label{def:alg1}
Let $G(n,m) =(V_\alpha \cup V_\beta,E)$ be a bipartite graph and $\ell\geq 0$ an integer. Define the digraph $\bar G(n,m)=(\bar V, \bar E)$ and the  capacity function $\bar c_\ell: \bar E\to \Z_{\geq 0}$ as follows:
\begin{enumerate}
    \item Add two new nodes to $G$, denoted by $s$ and $t$: $$\bar V := V_\alpha \cup V_\beta \cup \{s, t\}.$$
    \item Create the edge set $\bar E$ as a union of $\bar E_0$ and $\bar E_1$ where  
\begin{equation*}\label{eq:defE0E1}
\begin{aligned} \bar E_0 &:= \{s\alpha_i, \beta_j t \mid \alpha_i \in V_\alpha, \beta_j \in V_\beta\}, \\ 
\bar E_1 &:= \{ \alpha_i\beta_j \mid (\alpha_i,\beta_j) \in E\}.
\end{aligned}
\end{equation*}
\item Define $\bar c_\ell$ as follows:  
\begin{equation}\label{eq:capacity}
\bar c_\ell(e):= 
\begin{cases}
\ell & \mbox{if } e\in \bar E_0, \\
1 & \mbox{if } e\in \bar E_1.
\end{cases} 
\end{equation} 
\end{enumerate}
\end{definition}

The two new nodes $s$ and $t$ added in step 1 are the source and the target of $\bar G$, respectively. The value of $\ell$ will be problem-dependent and specified below. We illustrate the definition in Fig.~\ref{fig:algorithm1}.

\begin{figure}
    \centering
    \subfloat[]{
 \includegraphics{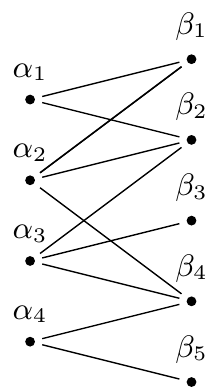}
} \qquad \qquad
\subfloat[]{
 \includegraphics{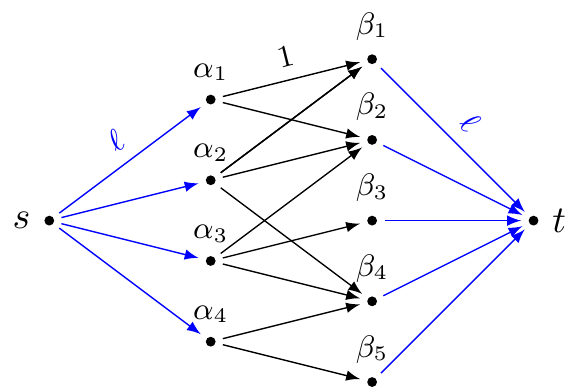}
}
    \caption{Given a bipartite graph $G(4,5)$ in (a), we plot the digraph $\bar G(4,5)$ in (b). The edge set $\bar E$ of $\bar G(4,5)$ is partitioned into two subsets $\bar E_0$ and $\bar E_1$ depicted in blue and black, respectively. The capacity of each blue (resp. black) edge is $\ell$ (resp. $1$).}
    \label{fig:algorithm1}
\end{figure}

Denote by $\fl$ the set of integer-valued flow maps on $\bar G$ with respect to~$\bar c_\ell$. When $\ell = 0$, $F_\ell$ is the singleton $\{f\}$, where  $f(e) = 0$ for all $e\in \bar E$.

Given a flow $f\in F_\ell$, we define the subgraph of the original bipartite graph $G$ {\em induced by $f$} as follows:  \begin{align}\label{eq:defgraphinducedbysaturatedflow1}
G_f &:=(V_\alpha\cup V_\beta,E_f) \mbox{ with }\nonumber\\ 
E_f &:=\{ (\alpha_i,\beta_j)\in E \mid f(\alpha_i\beta_j) \neq 0\}.
\end{align}
In words, we select only edges of $G$ whose directed versions in $\bar G$ are used by the flow $f$.

Recall that for a flow $f\in \fl$, its value $|f|$ is given by Eq.~\eqref{eq:defvalueofflow}. 
We need the following definition:

\begin{definition}[Saturated flows]\label{def:saturatedflow}
Given the digraph $\bar G(n,m)$ and a nonnegative integer $\ell$, we say that a flow $f\in F_\ell$ on $\bar G(n,m)$ is {\em saturated} if $|f| = n\ell$. We denote by $\bfl$ the set of saturated flows on $\bar G(n,m)$.
\end{definition}

Note that a saturated flow is necessarily a max-flow because, by Def.~\ref{def:alg1}, the value of a flow $f$ cannot  exceed $n\ell$.  
Further, note that by the integrality theorem (see Sec.~\ref{ssec:background}), if a real-valued flow $f$ with $|f|=n\ell$ exists, then $\bar F_\ell$ is non-empty. 
The following lemma shows that saturated flows are in correspondence with disjoint left-perfect matchings.

\begin{lemma}\label{lem:satflowsmatchingscorresp}
Let $\ell \geq 1$ and $f \in F_\ell$. Then, $f \in \bar F_\ell$ if and only if $G_f$ is a union of $\ell$ disjoint left-perfect matchings.
\end{lemma}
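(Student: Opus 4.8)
The plan is to reduce the statement to the degree characterization of Theorem~\ref{thm:exactkmatching} by reading off the degrees of $G_f$ directly from the flow values on the source and target edges. Since $f \in F_\ell$ is integer-valued and every edge of $\bar E_1$ has capacity $1$, each value $f(\alpha_i\beta_j)$ lies in $\{0,1\}$, so that by the definition of $E_f$ in~\eqref{eq:defgraphinducedbysaturatedflow1} we have $\deg(\alpha_i, G_f) = \sum_j f(\alpha_i\beta_j)$ and $\deg(\beta_j, G_f) = \sum_i f(\alpha_i\beta_j)$, each degree simply counting the $\bar E_1$-edges that carry unit flow.

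First I would exploit the flow-balance condition~\eqref{eq:flowcond}. At a left-node $\alpha_i$ the only incoming edge is $s\alpha_i$, so balance gives $f(s\alpha_i) = \sum_j f(\alpha_i\beta_j) = \deg(\alpha_i, G_f)$; since $\bar c_\ell(s\alpha_i) = \ell$, this degree is at most $\ell$. Symmetrically, at a right-node $\beta_j$ the only outgoing edge is $\beta_j t$, so balance gives $\deg(\beta_j, G_f) = \sum_i f(\alpha_i\beta_j) = f(\beta_j t) \le \ell$. Thus the condition ``$\deg(\beta_j, G_f) \le \ell$ for every right-node'' holds automatically for \emph{any} $f \in F_\ell$, independently of saturation.

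Next I would identify saturation with the remaining degree condition. Summing the left-node identity over $i$, the value of the flow satisfies $|f| = \sum_i f(s\alpha_i) = \sum_i \deg(\alpha_i, G_f) \le n\ell$, with equality if and only if $f(s\alpha_i) = \ell$, i.e., $\deg(\alpha_i, G_f) = \ell$, for every $i$. Hence $f \in \bar F_\ell$ (that is, $|f| = n\ell$) if and only if every left-node of $G_f$ has degree exactly $\ell$.

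Finally, I would invoke Theorem~\ref{thm:exactkmatching}: combining the two previous paragraphs, $f$ is saturated if and only if $G_f$ has every left-node of degree exactly $\ell$ and every right-node of degree at most $\ell$, which is precisely the necessary and sufficient condition for $G_f$ to be a union of $\ell$ disjoint left-perfect matchings. This yields both implications simultaneously. The argument is almost entirely bookkeeping; the only point requiring genuine care is the use of integrality of $f$ (guaranteed since $F_\ell$ consists of integer-valued flows) to ensure that the $\bar E_1$-flow values are $0/1$ and therefore literally count edges of $G_f$, so that flow values translate \emph{exactly} into degrees.
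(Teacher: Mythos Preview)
Your proof is correct and follows essentially the same approach as the paper's: both use the integrality of $f$ to identify $\bar E_1$-flow values with $0/1$ indicators, translate flow balance at $\alpha_i$ and $\beta_j$ into the degree identities $\deg(\alpha_i,G_f)=f(s\alpha_i)$ and $\deg(\beta_j,G_f)=f(\beta_jt)$, and then invoke Theorem~\ref{thm:exactkmatching}. The only stylistic difference is that the paper treats the two implications separately, whereas you package them as a single chain of equivalences by first observing that the right-node degree bound holds for every $f\in F_\ell$ and then characterizing saturation as equality in $\sum_i\deg(\alpha_i,G_f)\le n\ell$.
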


\begin{proof}
Assuming that $f \in F_\ell$ and $G_f=(V_\alpha \cup V_\beta, E_f)$ is a disjoint union of $\ell$ left-perfect matchings, we show that $f\in \bar F_\ell$. 
It should be clear that $\deg(\alpha_i;G_f)=\ell$ for each $\alpha_i  \in V_\alpha$. Hence, there exist $1\leq i_1,\ldots, i_\ell\leq \ell$ so that $(\alpha_i,\beta_{i_j}) \in E_f$, for $1 \leq i \leq n$ and $1 \leq j \leq \ell$. From the definition of $G_f$ and the fact that $f$ is integer-valued, we have that $f(\alpha_i\beta_{i_j})\geq 1$. On one hand, using Eqns.~\eqref{eq:flowcond} and~\eqref{eq:defvalueofflow}, we obtain that  $$|f|=\sum_{i=1}^nf(s\alpha_i)=\sum_{(\alpha_i,\beta_{i_j})\in E_f}f(\alpha_i\beta_{i_j})\geq n \ell.$$ 
On the other hand, since $f \in F_\ell$, $|f|\leq n\ell$. 
Thus, we must have that $|f|=n\ell$ and, hence, $f \in \bar F_\ell$.  

Reciprocally, assuming that $f \in \bar F_\ell$, we show that $G_f$ is a disjoint union of $\ell$ left-perfect matchings.   
To this end, we claim that the degree of each left-node in $G_f$ is exactly $\ell$ and the degree of each right-node in $G_f$ is less than or equal to $\ell$. If this holds, then the result is an immediate consequence of Theorem~\ref{thm:exactkmatching}. 
We now prove the claim. For the left-nodes, because $f$ is saturated, $n\ell=|f|=\sum^n_{i = 1}f(s\alpha_i)$. 
By the definition of the capacity function~\eqref{eq:capacity},  $f(s\alpha_i) \leq \ell$. It follows that $f(s\alpha_i) = \ell$ for all $i = 1,\ldots,n$. 
Next, by the balance condition and the definition of $G_f$ in~\eqref{eq:defgraphinducedbysaturatedflow1}, 
$$f(s\alpha_i)=\sum_{j:(\alpha_i,\beta_{j})\in E}f(\alpha_i\beta_{j})=\sum_{j:(\alpha_i,\beta_j)\in E_f}f(\alpha_i\beta_j).$$ 
Further, by the capacity function~\eqref{eq:capacity} and the fact that $f$ is integer-valued, we have that  $f(\alpha_i\beta_{j}) = 1$ for $(\alpha_i,\beta_{j})\in E_f$ and, thus, there are exactly $n$ edges in $E_f$ incident to $\alpha_i$. Finally, for the right-nodes, one can apply similar arguments: first, from the capacity function, we have that $f(\beta_jt) \leq \ell$; then, the balance condition $f(\beta_jt)=\sum_{i:(\alpha_i,\beta_{j})\in E_f}f(\alpha_i\beta_{j})$ implies that $\deg(\beta_j;G_f)\leq \ell$. This proves the claim.
\end{proof}

With the above preliminaries, we now provide a solution to Problem~P1:

\begin{theorem}\label{th:maxflowgood}
Let $G(n,m)$ be a bipartite graph with $m \ge n$. Let $\bar G(n,m)$ be the digraph from Def.~\ref{def:alg1} and $\bar F_\ell$ be given as in Def.~\ref{def:saturatedflow}. Let 
$\ell^*:=\max \left \{\ell \geq 0 \mid \bfl \neq \varnothing \right \}$. 
Then, $\ell^*\leq m$ and the following hold:
\begin{enumerate}
   % \item If $\ell^* = 0$, then there does not exist a left-perfect matching in $G(n,m)$. {\color{red} merge (1) and (3)} 
    \item If $\ell^*\geq 0$, then  $\bar F_{\ell} \neq \varnothing$ for $0\leq \ell\leq \ell^*$ and for any $f \in \bar F_{\ell}$, the bipartite graph $G_f(n,m)$ given in~\eqref{eq:defgraphinducedbysaturatedflow1} is a union of $\ell$ disjoint left-perfect matchings. 
    \item The degree of strong resilience of $G(n,m)$ is $(\ell^*-1)$. 
\end{enumerate}
\end{theorem}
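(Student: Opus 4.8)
The plan is to establish Theorem~\ref{th:maxflowgood} by leveraging the correspondence between saturated flows and disjoint left-perfect matchings provided by Lemma~\ref{lem:satflowsmatchingscorresp}, together with the characterization of strong resilience in Lemma~\ref{lem:reskpm}. First I would prove the bound $\ell^* \le m$: by the capacity function~\eqref{eq:capacity}, each of the $m$ edges $\beta_j t \in \bar E_0$ has capacity $\ell$, so the min-cut separating $t$ from the rest has capacity at most $m\ell$; but a saturated flow requires $|f| = n\ell$. A cleaner argument uses Lemma~\ref{lem:satflowsmatchingscorresp} directly: if $\bar F_\ell \neq \varnothing$ then $G$ contains $\ell$ disjoint left-perfect matchings, each consuming $n$ edges out of the total $|E| \le nm$ edges, forcing $\ell \le m$. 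This matches the observation, stated just after Theorem~\ref{thm:exactkmatching}, that the degree of each right-node is at most $n$ and hence at most $m$ disjoint matchings can fit.

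Next, for item~1, I would prove the monotonicity claim that $\bar F_\ell \neq \varnothing$ for all $0 \le \ell \le \ell^*$. The key idea is that a union of $\ell^*$ disjoint left-perfect matchings, which exists by Lemma~\ref{lem:satflowsmatchingscorresp} applied to any $f \in \bar F_{\ell^*}$, contains as a sub-union any $\ell \le \ell^*$ of those matchings; this sub-union is itself a union of $\ell$ disjoint left-perfect matchings, and running Lemma~\ref{lem:satflowsmatchingscorresp} in the reverse direction yields a saturated flow in $F_\ell$. The $\ell = 0$ case is handled by the remark following Def.~\ref{def:alg1} that $F_0 = \{f\}$ with $f \equiv 0$, which is vacuously saturated since $|f| = 0 = n \cdot 0$. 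The statement that $G_f$ is a union of $\ell$ disjoint left-perfect matchings for any $f \in \bar F_\ell$ is then just a restatement of the forward direction of Lemma~\ref{lem:satflowsmatchingscorresp}, so this item is essentially immediate once monotonicity is in hand.

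For item~2, I would combine the above with Lemma~\ref{lem:reskpm}, which states that $G$ is strongly $k$-resilient if and only if it has \emph{exactly} $(k+1)$ disjoint left-perfect matchings. By Lemma~\ref{lem:satflowsmatchingscorresp}, the maximal number of disjoint left-perfect matchings that $G$ admits equals $\ell^*$, since $\ell^*$ is by definition the largest $\ell$ for which a union of $\ell$ disjoint left-perfect matchings exists. Setting $k+1 = \ell^*$, i.e., $k = \ell^* - 1$, gives that $G$ is strongly $(\ell^*-1)$-resilient, which is exactly the claim. One edge case worth flagging: if $\ell^* = 0$, then $G$ admits no left-perfect matching, so by Lemma~\ref{lem:frpm} the pattern is not of full rank, and the convention~\eqref{eq:negativesrs} assigns strong resilience $-1 = \ell^* - 1$, keeping the formula consistent.

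The main obstacle I anticipate is making the monotonicity argument in item~1 fully rigorous in the flow language rather than the matching language. While ``drop one matching from a union of $\ell^*$ matchings'' is intuitively clear, the theorem is phrased so that one must produce an honest element of $\bar F_\ell$, and the cleanest route is to pass through Lemma~\ref{lem:satflowsmatchingscorresp} in both directions: extract the $\ell^*$ disjoint matchings from a saturated flow, discard $\ell^* - \ell$ of them, and reconstruct a saturated flow in $F_\ell$ from the remaining sub-union. I would make sure the integrality theorem is invoked so that the reconstructed flow is genuinely integer-valued and lies in $F_\ell$ as defined. Everything else reduces to bookkeeping with the two preceding lemmas.
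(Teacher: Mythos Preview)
Your proposal is correct and follows essentially the same route as the paper: both arguments hinge on Lemma~\ref{lem:satflowsmatchingscorresp} to pass between saturated flows and unions of disjoint left-perfect matchings, establish monotonicity of $\bar F_\ell \neq \varnothing$ by dropping matchings from a union of $\ell^*$ of them and explicitly rebuilding a flow, and handle the $\ell^*=0$ edge case separately. The only cosmetic differences are that the paper cites Corollary~\ref{cor:littlebc} for the bound $\ell^*\le m$ (rather than counting edges directly), and writes out the reconstructed flow $f$ by an explicit formula---which, incidentally, makes your appeal to the integrality theorem in the last paragraph unnecessary, since the hand-built flow is already integer-valued.
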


\begin{proof}
We first show that $\ell^*\leq m$. Suppose to the contrary that $\ell^*>m$; then, by Lemma~\ref{lem:satflowsmatchingscorresp}, $G$ contains at least $(m+1)$ disjoint left-perfect matchings, which contradicts item~2 of Cor.~\ref{cor:littlebc} saying that the degree of strong-resilience of $G(n,m)$ is at most $(m-1)$ (and, hence, $G(n,m)$ can  contain at most $m$ disjoint left-perfect matchings). We next establish the two conditions of the theorem. 
\vspace{.1cm}

\noindent
{\em Proof of item~1.} Because $\bar F_{\ell^*}$ is nonempty, by Lemma~\ref{lem:satflowsmatchingscorresp}, $G$ contains $\ell^*$ disjoint left-perfect matchings, denoted by $P_1,\ldots, P_{\ell^*}$. 
Let $G' = (V_\alpha\cup V_\beta, E')$ be the subgraph of $G$ induced by $P_1,\cdots, P_\ell$. We use $G'$ to define a flow $f$ on $\bar G$ as follows: 
$$
f(e) := 
\begin{cases}
\ell & \mbox{if } e=s\alpha_i\mbox{ for }\alpha_i \in V_\alpha,\\
\deg(\beta_j;G') & \mbox{if } e= \beta_jt, \\
1 & \mbox{if } e=\alpha_i\beta_j \mbox{ for }(\alpha_i,\beta_j) \in E',\\
0 & \mbox{otherwise}.  
\end{cases}
$$
It should be clear that $f\in \bar F_\ell$. 
Using again Lemma~\ref{lem:satflowsmatchingscorresp}, we have that for any $f\in \bar F_\ell$, $G_f$ is a union of $\ell$ disjoint left-perfect matchings. 
\vspace{.1cm}

\noindent
{\em Proof of item~2.} First, we consider the case $\ell^*= 0$ and show that $G$ does not have a left-perfect matching (i.e., $\srs G = -1$).   
  Suppose to the contrary that there exists a left-perfect matching $P$ in $G$; then, consider the flow $f$ on $\bar G$ defined as follows:  
$$f(e) := \begin{cases}
0 & \mbox{if } e=\alpha_i\beta_j \mbox{ with } (\alpha_i,\beta_j) \notin P, \\
1 & \mbox{otherwise}.
\end{cases}
$$
It is not hard to see that $f$ is a flow on $\bar G$ with respect to $\bar c_1$ (i.e., $\ell=1$). By construction, $f$ is a saturated flow in $\bar F_1$, which is a contradiction.  
For the other case where $\ell^*\geq 1$, 
the item follows from the definition of $\ell^*$ and Lemma~\ref{lem:satflowsmatchingscorresp}. 
\end{proof}

\begin{example}\normalfont
Consider the bipartite graph $G(4,5)$ given in Fig.~\ref{fig:algorithm1}. We run, e.g., the Ford-Fulkerson algorithm for the weighted digraph $\bar G(4,5)$ with $\ell = 1,2$. We show in Fig.~\ref{fig:fish2} the corresponding saturated flows, which implies that $\bar F_\ell \neq \varnothing$ for $\ell = 1,2$. Also, note that $\bar F_3 = \varnothing$ because the degrees of nodes $\alpha_1$ and $\alpha_4$ in $G(4,5)$ are both $2$. Using Theorem~\ref{th:maxflowgood}, we have that $G(4,5)$ is strongly $1$-resilient. \qed
\end{example}

\begin{figure}
    \centering
     \subfloat[]{
\includegraphics{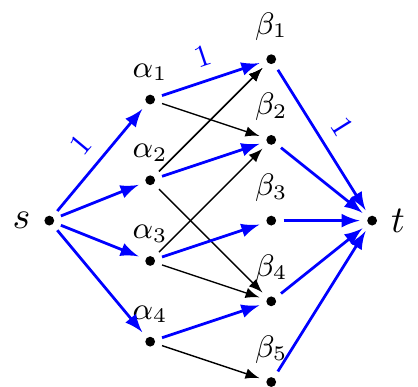}
}\qquad\qquad
    \subfloat[]{
  \includegraphics{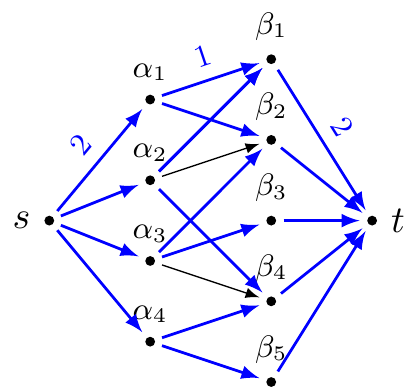}
}
    \caption{We show saturated flows $f_\ell$ on the graph $\bar G(4,5)$ as depicted in Fig.~\ref{fig:algorithm1} for $\ell = 1$ in (a) and $\ell = 2$ in (b). The edges with nonzero values under $f_\ell$ are highlighted in blue. In the case $\ell=2$, the two disjoint left-perfect matchings are  $\{(\alpha_1,\beta_1), (\alpha_2,\beta_4), (\alpha_3,\beta_2),(\alpha_4,\beta_5) \}$ and $\{(\alpha_1,\beta_2), (\alpha_2,\beta_1), (\alpha_3,\beta_3),(\alpha_4,\beta_4) \}$. Note, in particular, that the perfect matching of case $\ell=1$ is not part of the disjoint matchings for $\ell=2$.}
    \label{fig:fish2}
\end{figure}

The above theorem provides an algorithmic solution, of polynomial-time complexity, to P1, i.e. to determine $\srs G(n,m)$. The algorithm is as follows: Start by setting $\ell:=m$, and repeat the following procedure: 
\begin{enumerate}
    \item Construct the digraph $\bar G(n,m)$ and the capacity function $\bar c_\ell$. The complexity is $O(m+n)$.
    \item Run the Ford-Fulkerson algorithm on $\bar G(n,m)$ initialized at the zero flow and denote its output by $f$. The complexity is $O(n^2m\ell)$~\cite{cormen2009introduction}. If $|f|= n\ell$, then  return $\srs G(n,m) =\ell-1$. The algorithm is over. 
    \item If $|f| < n\ell$ and if $\ell \geq 2$ decrease the value of $\ell$ by $1$ and return to step~1. Otherwise, return $\srs G(n,m) =-1$ and the algorithm is over.
\end{enumerate}

\subsection{Minimal number of edges to increase $\srs G$}
In this subsection, we address the following simple question: Given a graph $G(n,m)$ which is a union of $k$ disjoint left-perfect matchings with $k \le m-1$, how many edges need to be added to this graph to obtain a bipartite graph  $G^*(n,m)$ which is a union of $(k+1)$ disjoint left-perfect matchings? Understanding this problem provides a solution to Problems P2 and, partially, to  P3 for the special case where $G$ is a union of disjoint left-perfect matchings. The advantage of the solution proposed here, when compared to the algorithms provided in the next subsection for solving general cases, is that it allows to establish an analytical bound on the number of edges needed to increase the degree of strong resilience.

To proceed, we introduce the natural notion of the {\em graph complement}. 
Given the {\em complete bipartite graph} $K = (V_\alpha\cup V_\beta,E_K)$ and the bipartite  graph $G=(V_\alpha\cup V_\beta, E)$, we denote by $G^c$ the complement of $G$ (in $K$); more precisely, $$G^c:=(V_\alpha\cup V_\beta, E_K-E).$$
\vspace{.1cm}
\noindent
{\em Special case $m=n$.} 
We have the following result:

\begin{lemma}\label{lem:specialcasen=m}
Let $G(n,n)$ be a union of $k$ disjoint perfect matchings, with $k \le n$. Let $G^c(n,n)$ be the complement of $G(n,n)$. Then, $G^c(n,n)$ is a union of $(n-k)$ disjoint perfect matchings. 
\end{lemma}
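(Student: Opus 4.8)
The plan is to reduce everything to a degree count and then invoke Theorem~\ref{thm:exactkmatching}. The key observation is that a union of $k$ disjoint perfect matchings is exactly a $k$-regular bipartite graph: each perfect matching contributes precisely one edge at every node, and disjointness means these contributions do not overlap, so every node of $G(n,n)$ has degree exactly $k$. (This is also the content of the necessity direction of Theorem~\ref{thm:exactkmatching} specialized to $m=n$, where left-perfect matchings coincide with perfect matchings.)

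First I would pass to the complement. Since the complete bipartite graph $K=(V_\alpha\cup V_\beta, E_K)$ is $n$-regular---every left-node is joined to all $n$ right-nodes and vice versa---and $G^c$ is obtained by deleting exactly the $k$ edges of $G$ incident to each node, every node of $G^c(n,n)$ has degree exactly $n-k$. Thus $G^c$ is $(n-k)$-regular.

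It then remains to apply the sufficiency direction of Theorem~\ref{thm:exactkmatching} with the parameter $\ell:=n-k$. Both hypotheses hold immediately from $(n-k)$-regularity: the degree of each left-node is exactly $n-k$ (item~(1)), and the degree of each right-node is $n-k\le n-k$ (item~(2)). Hence $G^c(n,n)$ is a union of $(n-k)$ disjoint left-perfect matchings, which for $m=n$ are perfect matchings, as claimed.

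I do not expect any genuine obstacle here; the only point requiring a word of care is the boundary case $k=n$. There $G=K$, so $G^c$ is the empty graph, and Theorem~\ref{thm:exactkmatching} (stated for $1\le \ell$) does not directly apply; but a union of $0$ perfect matchings is, by convention, the empty graph, so the statement holds vacuously.
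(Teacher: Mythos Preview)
Your proof is correct and follows essentially the same approach as the paper: apply Theorem~\ref{thm:exactkmatching} once to deduce that $G(n,n)$ is $k$-regular, pass to the complement to get $(n-k)$-regularity, and apply Theorem~\ref{thm:exactkmatching} again. Your treatment is slightly more thorough in that you explicitly address the boundary case $k=n$, which the paper's proof glosses over.
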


\begin{proof}
Since $m = n$, by Theorem~\ref{thm:exactkmatching}, the degree of every node in $G(n,n)$ is $k$. It then follows that the degree of each node in $G^c(n,n)$ is $(n-k)$. Using Theorem~\ref{thm:exactkmatching} again, we conclude that $G^c(n,n)$ is a union of $(n-k)$ disjoint perfect matchings. 
\end{proof}

It should be clear that adding {\em any} perfect matching of $G^c(n,n)$  to $G(n,n)$ yields a graph $G^*(n,n)$ which is  a union of $(k+1)$ disjoint perfect matchings. 
However, such a fact cannot be extended to the case  $m > n$ as seen in the following example:  

\begin{example}\normalfont
To see this, consider the graph $G(2,3)$ in Fig.~\ref{fig:exg23}, which depicts a simple case for which $m>n$. Here, $G(2,3)$ is the union of two disjoint left-perfect matchings. It is easy to see that $G^c(2,3)$ does {\em not} contain a  left-perfect matching.
Nevertheless, adding $G^c(2,3)$ to $G(2,3)$ still yields the graph $K(2,3)$ which is a disjoint union of $3$ left-perfect matchings. 
The key difference between this case and the one with $n=m$ is that in the latter case, one can always produce a graph $G^*(n,n)$ which is composed of the {\em all of the existing} $k$ disjoint perfect matchings of $G(n,n)$ and an {\em additional} disjoint perfect matching. 
In this example $G(2,3)$, the $3$ disjoint left-perfect matchings of $G^*(2,3)$ do not contain all of the left-perfect matchings that were used to express $G(2,3)$ as a disjoint union of perfect matchings. 
Generally speaking, this fact precludes the use of simple inductive arguments that rely on adding $n$ edges while keeping
 the $k$ disjoint perfect matchings that made $G(n,m)$.
\begin{figure}
    \centering
   \subfloat[]{
 \includegraphics{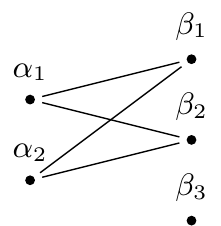}
}\qquad  
\subfloat[]{
 \includegraphics{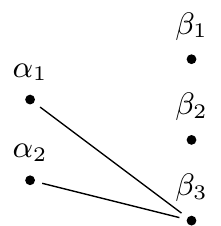}
}  \qquad
 \subfloat[]{
 \includegraphics{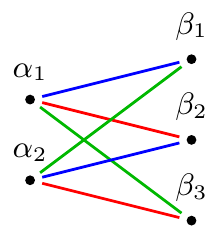}
}
    \caption{In (a), we depict a graph $G(2,3)$ which is the union of two disjoint left-perfect matchings  $P_1 = \{(\alpha_1,\beta_1), (\alpha_2,\beta_2)\}$   and $P_2= \{(\alpha_1,\beta_2), (\alpha_2,\beta_1) \}$. 
    In (b), we depict the complementary graph $G^c(2,3)$; it does not contain a left-perfect matching. In (c), we plot the three disjoint left-perfect matchings in the complete bipartite graph $K(2,3)=G(2,3) \cup G^c(2,3) $.
    }
    \label{fig:exg23}
\end{figure} 
\end{example}

\noindent
{\em General case $m \ge n$.} We establish the following  result, the proof of which will be {\em constructive}. 

\begin{theorem}\label{thm:ktok+1}
Let $G(n,m)$, with $n\leq m$, be a union of $k$ disjoint left-perfect matchings, for $k < m$. Then, one can add $\ell n$ edges, for $1 \le \ell \le m-k $,  to $G(n,m)$ such that the resulting graph $G^*(n,m)$ is a union of $(k+\ell)$ disjoint left-perfect matchings. 
\end{theorem}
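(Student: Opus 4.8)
The plan is to recast the statement, via Theorem~\ref{thm:exactkmatching}, as a degree-prescription problem and to solve it with a max-flow argument in the spirit of Definition~\ref{def:alg1}. By Theorem~\ref{thm:exactkmatching}, producing a graph $G^*(n,m)\supseteq G(n,m)$ that is a union of $(k+\ell)$ disjoint left-perfect matchings is equivalent to producing $G^*$ in which every left-node has degree exactly $k+\ell$ and every right-node has degree at most $k+\ell$. Since $G$ already has all left-degrees equal to $k$, such a $G^*$ is obtained by adjoining a subgraph $H$ of the complement $G^c$ (so that no edge is doubled) in which each left-node has degree exactly $\ell$ and $\deg(\beta_j;G)+\deg(\beta_j;H)\le k+\ell$ for every right-node $\beta_j$. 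Any such $H$ has exactly $\ell n$ edges, so $G^*=G\cup H$ settles the theorem, and the whole problem reduces to exhibiting~$H$.

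To produce $H$ constructively, I would build a digraph and capacity function as in Definition~\ref{def:alg1}, but on $G^c$ in place of $G$: orient each edge of $G^c$ from $\alpha_i$ to $\beta_j$ with capacity $1$, add a source $s$ with edges $s\alpha_i$ of capacity $\ell$, and a target $t$ with edges $\beta_j t$ of capacity $k+\ell-d_j$, where $d_j:=\deg(\beta_j;G)\le k$. A flow of value $\ell n$ saturates every edge $s\alpha_i$, hence uses exactly $\ell$ outgoing edges at each $\alpha_i$, while the target capacities enforce the right-degree bound; by the integrality theorem (Sec.~\ref{ssec:background}) an integral maximum flow exists and determines $H$ through its support on the oriented $G^c$-edges, and the Ford--Fulkerson algorithm produces it in polynomial time. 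It therefore remains only to prove that the maximum flow equals $\ell n$.

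I would then split into two regimes. In the \emph{easy regime} $k+\ell\ge n$, the right-degree constraint is vacuous, since every right-degree of $G^*\subseteq K(n,m)$ is at most $n\le k+\ell$; feasibility then only requires each left-node to have at least $\ell$ non-neighbors in $G$, i.e.\ $m-k\ge\ell$, which is exactly the hypothesis $\ell\le m-k$, and $H$ is obtained by selecting any $\ell$ of the $G^c$-neighbors of each $\alpha_i$. In the \emph{hard regime} $k+\ell<n$, a convenient route is to show that $G^c$ contains $\ell$ \emph{disjoint} left-perfect matchings and to take $H$ to be their union: then $\deg(\beta_j;H)\le\ell\le k+\ell-d_j$, so the right-degree bound holds automatically. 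Containing $\ell$ disjoint left-perfect matchings is precisely saturation of the Definition~\ref{def:alg1} flow applied to $G^c$ with parameter $\ell$ (Lemma~\ref{lem:satflowsmatchingscorresp}), so both regimes come down to a saturation claim.

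The main obstacle is establishing saturation in the hard regime, equivalently (by Lemma~\ref{lem:maxflowmincut}) that every $s$--$t$ cut has capacity at least $\ell n$, equivalently the capacitated Hall condition $\sum_{\beta_j\in N_{G^c}(A)}(k+\ell-d_j)\ge\ell\,|A|$ for all $A\subseteq V_\alpha$. The delicate point is that purely degree-theoretic estimates on a cut are \emph{too weak}: they permit worst-case configurations -- for instance a left-set $A$ whose nodes share a large common $G$-neighborhood -- that are \emph{not realizable} by any graph which is a union of $k$ disjoint left-perfect matchings. The argument I would use turns this non-realizability into the cut bound: if Hall failed for some $A$, then $A$ would have a common $G$-neighborhood $Z$ with $|Z|>m-|A|$; since each node of $Z$ is adjacent to all of $A$ yet has degree at most $k$, the nodes of $Z$ are nearly saturated by $A$, which forces the complementary set $\bar A:=V_\alpha\setminus A$ to cram its $k(n-|A|)$ incident edges into the few remaining right-nodes $V_\beta\setminus Z$ -- contradicting either simplicity of $G$ or, more cleanly, the Hall condition that $G$ itself satisfies as a graph possessing a left-perfect matching (Lemma~\ref{lem:frpm}). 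Converting this realizability obstruction into the clean cut inequality, and carrying it through for the capacity-$\ell$ flow (i.e.\ for $\ell$ disjoint matchings at once) rather than for a single left-perfect matching, is where the real work lies; the explicit flow then renders the entire construction constructive and polynomial-time.
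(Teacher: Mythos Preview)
Your flow construction is essentially the paper's Definition~\ref{def:alg2} with source capacity $\ell$ in place of $1$ and target capacity $k+\ell-d_j$ in place of $(k+1)-d_j$; that part is correct and mirrors the paper's approach. The paper, however, first reduces to $\ell=1$ and iterates, and then proves the single min-cut bound $c(S,T)\ge n$ by a short explicit computation (Proposition~\ref{prop:opapa}). You instead attempt general $\ell$ in one shot, introduce a regime split, and leave the hard regime unfinished.

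The gap is exactly where you say it is. In the hard regime you replace the original saturation claim by a different one---that $G^c$ contains $\ell$ disjoint left-perfect matchings---but this detour does not buy anything: its min-cut bound is no easier to establish than the one for your original flow, and the Hall/realizability sketch you give (``common $G$-neighbourhood $Z$ with $|Z|>m-|A|$, crowd $\bar A$ into $V_\beta\setminus Z$'') is not carried to an actual contradiction. As written, the hard regime is unproved.

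The repair is simpler than your last paragraph suggests, and no regime split is needed. For your own flow (source capacities $\ell$, target capacities $k+\ell-d_j$, unit capacities on $G^c$-edges), the cut decomposition~\eqref{eq:capcut}--\eqref{eq:opapa3rd} goes through verbatim with $\ell$ in place of $1$: writing $p:=|T_\alpha|$ and $q:=|S_\beta|$, one gets
\[
c(S,T)\ \ge\ \ell p+(m-k)(n-p)-qn+(k+\ell)q.
\]
If $p+q\ge n$ then already $c(S,T)\ge \ell p+\ell q\ge \ell n$ (using $d_j\le k$). If $p+q<n$, bound $q\le n-p$ in the display to obtain
\[
c(S,T)\ \ge\ \ell p+(n-p)\bigl[(m-k)-(n-k-\ell)\bigr]=\ell p+(n-p)(m-n+\ell)\ \ge\ \ell n,
\]
since $m\ge n$. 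Thus the max-flow equals $\ell n$ for every $1\le\ell\le m-k$, and your $H$ exists. Equivalently, you can follow the paper: reduce to $\ell=1$, where the above is precisely Proposition~\ref{prop:opapa}, and iterate.
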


The next result is then an immediate consequence of Theorem~\ref{thm:ktok+1}:  

\begin{corollary}
Given a strongly $k$-resilient $G(n,m)$, with $k < m$, and 
given a budget of $p$ additional edges, one can select $p$ edges $\{e_1,\ldots, e_p\}$ out of $G^c(n,m)$ such that the new graph $G(n,m) \cup \{e_1,\ldots,e_p\}$ is at least strongly $(k + \lfloor \frac{p}{n}\rfloor)$-resilient.
\end{corollary}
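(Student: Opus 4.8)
The plan is to reduce the corollary to a single application of Theorem~\ref{thm:ktok+1}, applied not to $G$ itself but to a carefully chosen subgraph of $G$. First I would invoke the hypothesis that $G(n,m)$ is strongly $k$-resilient together with Lemma~\ref{lem:reskpm} to extract $(k+1)$ pairwise disjoint left-perfect matchings $P_1,\ldots,P_{k+1}$ of $G$. Setting $H := \bigcup_{i=1}^{k+1} P_i$, the subgraph $H$ is a union of $(k+1)$ disjoint left-perfect matchings and, by Theorem~\ref{thm:exactkmatching}, has every left-node of degree exactly $(k+1)$ and every right-node of degree at most $(k+1)$. Crucially $H \subseteq G$, so that the edges of $G^c$ are precisely the edges of $H^c$ that are not already present in $G$.

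Next I would apply Theorem~\ref{thm:ktok+1} to $H$ with $\ell := \lfloor p/n \rfloor$, in the feasible range $\ell \le m-(k+1)$ (equivalently $k+\ell \le m-1$, which must hold in any case since by Corollary~\ref{cor:littlebc} no graph $G(n,m)$ can exceed strong resilience $m-1$). The theorem supplies $\ell n$ edges of $H^c$ whose addition to $H$ yields a graph $H^*$ that is a union of $(k+1+\ell)$ disjoint left-perfect matchings.

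The remaining step is bookkeeping on the budget followed by an appeal to monotonicity. Each of the $\ell n$ edges produced by Theorem~\ref{thm:ktok+1} lies in $H^c$; those that already belong to $G$ cost nothing, while the rest lie in $G^c$ and number at most $\ell n \le p$. Selecting these genuinely new edges, and padding with arbitrary further edges of $G^c$ if fewer than $p$ were needed, produces a set $\{e_1,\ldots,e_p\}\subseteq G^c$ with $H^* \subseteq G \cup \{e_1,\ldots,e_p\}$. Since $H^*$ already contains $(k+1+\ell)$ disjoint left-perfect matchings, so does $G \cup \{e_1,\ldots,e_p\}$, and Lemma~\ref{lem:reskpm} then yields that this graph is at least strongly $(k+\ell) = (k + \lfloor p/n\rfloor)$-resilient, as claimed.

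The only genuine subtlety, and the point I expect to require care, is the feasibility bound $\lfloor p/n\rfloor \le m-k-1$ needed to invoke Theorem~\ref{thm:ktok+1}: outside this range the asserted target would exceed the maximal strong resilience $m-1$ permitted by Corollary~\ref{cor:littlebc}, so the statement is only meaningful (and the construction only needed) when the target is itself achievable; the extremal case $k=m-1$ forces $\lfloor p/n\rfloor = 0$ and is trivial. Everything else is a routine combination of the matching characterization of strong resilience (Lemma~\ref{lem:reskpm}), the degree characterization of unions of matchings (Theorem~\ref{thm:exactkmatching}), and the \emph{constructive} edge-addition guarantee of Theorem~\ref{thm:ktok+1}, so that the corollary is effective as well as existential.
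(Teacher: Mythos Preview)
Your proposal is correct and matches the paper's intent: the corollary is stated there as an immediate consequence of Theorem~\ref{thm:ktok+1} with no explicit proof, and your passage to the subgraph $H=\bigcup_{i=1}^{k+1}P_i$ together with the budget bookkeeping is exactly the natural way to make that reduction rigorous. As a minor simplification, the feasibility bound $\lfloor p/n\rfloor \le m-k-1$ you flag is in fact automatic, since the ability to select $p$ edges from $G^c$ forces $p \le mn - |E(G)| \le (m-k-1)n$ by item~1 of Corollary~\ref{cor:littlebc}.
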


The remainder of the subsection is devoted to the proof of Theorem~\ref{thm:ktok+1}. It suffices to prove the Theorem for the case $\ell = 1$; one can then iteratively apply this case to  prove the general result.  
The proof has two parts: The first part relates the feasibility of the addition problem (i.e., the problem of adding $n$ edges to $G(n,m)$ to form a union of $(k+1)$ disjoint left-perfect matchings) to a max-flow problem; this is akin to what was done in Sec.~\ref{ssec:maxflow}. Here, we define a max-flow problem whose  capacity function allows us to decide whether the addition problem is feasible. Then, in the second part, relying on the max-flow min-cut Theorem, we compute explicitly the maximal capacity by computing the  corresponding minimal cut.    
\vspace{.1cm}

\noindent
{\em Max-flow formulation:}  We start by constructing another directed version of the bipartite graph $\hat G(n,m)$ with an appropriate capacity function.  
The solution of a newly defined max-flow problem on this graph will yield the edges  needed to increase the resilience:

\begin{definition}\label{def:alg2}
Given a bipartite graph $G = (V_\alpha\cup V_\beta, E)$ and an integer $k$, define the digraph $\hat G^c=(\hat V, \hat E)$ and the capacity function $\hat c_k: \hat E\to \Z_{\geq 0}$ as follows:
\begin{enumerate}
    \item Add two new nodes to $G$, denoted by $s$ and $t$: $$\hat V := V_\alpha \cup V_\beta \cup \{s, t\}.$$
    \item Create the edge set $\hat E$ as a union of $\hat E_0$ and $\hat E_1$ where  
\begin{equation}\label{eq:defE0E1cop}
\begin{aligned} 
\hat E_0 &:= \{s\alpha_i \mid \alpha_i\in V_\alpha\} \cup \{ \alpha_i\beta_j \mid (\alpha_i,\beta_j) \notin E\}, \\ 
\hat E_1 &:= \{\beta_j t \mid \beta_j\in V_\beta\}.
\end{aligned}
\end{equation}

\item If $e\in \hat E_0$, then $\hat c_k(e) := 1$; if $e = \beta_j t\in \hat E_1$, then 
\begin{equation}\label{eq:capacity2}
    \hat c_k(e):=\max\{0,(k+1) - \deg(\beta_j;G)\}.
\end{equation}
\end{enumerate}
\end{definition}

We illustrate the definition in Fig.~\ref{fig:algorithm2}. 

\begin{figure}
    \centering
    \subfloat[]{
 \includegraphics{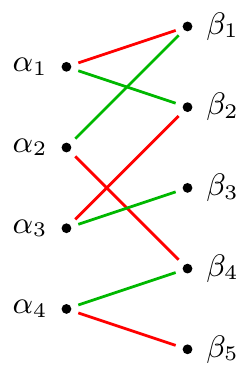}
} \qquad \qquad
\subfloat[]{
  \includegraphics{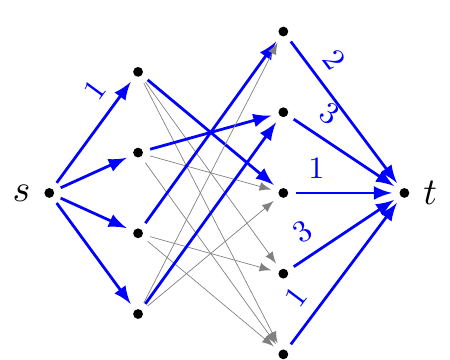}
}
\caption{The bipartite graph $G(4,5)$ in (a) is a union of two disjoint left-perfect matchings, highlighted in red and green. The weighted digraph in (b) is $\hat G^c(4,5)$ for $k=3$. Correspondingly, by Def.~\ref{def:alg2}, the capacity of an edge $\beta_j t$ is  given by $4 - \deg(\beta_j; G)$, whereas the capacity of each remaining edge is $1$. We then run the Ford-Fulkerson algorithm and highlight (in blue) a solution $f$ to the max-flow problem~\eqref{eq:defmaxflowalgo2}. By Prop.~\ref{prop:opapa}, the capacity of any such solution is given by $|f| = n = 4$. }
    \label{fig:algorithm2}
\end{figure}

Let $\hat F_k$ be the set of integer-valued flow maps on $\hat G^c(n,m)$. We will now relate the max-flow problem on $\hat G^c(n,m)$:  
\begin{equation}\label{eq:defmaxflowalgo2}
    \max_{f\in \hat F_k} |f|
\end{equation}
to Theorem~\ref{thm:ktok+1}. As mentioned above, requiring an integer solution is not constraining; it suffices to use the Ford-Fulkerson algorithm.

\begin{proposition}\label{prop:GtoG'kk1}
Let $G(n,m)$ be a union of $k$ disjoint left-perfect matchings, for $0\leq k < m$, and $\hat G^c(n,m)$ be given in Def.~\ref{def:alg2}.   
Let $f$ be a solution to problem~\eqref{eq:defmaxflowalgo2}. If $|f| = n$, then there exist $n$ edges   $\{e_1,\ldots, e_n\} \in G^c(n,m)$ so that $G^*(n,m):=G(n,m) \cup \{e_1,\ldots,e_n\}$  is  a union of $(k+1)$ disjoint left-perfect matchings.
\end{proposition}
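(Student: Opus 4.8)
The plan is to show that a max-flow of value $n$ on $\hat G^c(n,m)$ produces exactly the $n$ new edges needed to add one more disjoint left-perfect matching to $G$. The key is to interpret the flow combinatorially and then verify the degree conditions of Theorem~\ref{thm:exactkmatching} for the augmented graph $G^*$. First I would take an integer-valued solution $f$ with $|f|=n$ (the integrality theorem guarantees one exists). Since every edge $s\alpha_i$ has capacity $1$, the balance condition at $s$ forces $f(s\alpha_i)\in\{0,1\}$, and $|f|=n$ forces $f(s\alpha_i)=1$ for \emph{every} $\alpha_i\in V_\alpha$. By balance at each $\alpha_i$, exactly one outgoing edge $\alpha_i\beta_j$ (with $(\alpha_i,\beta_j)\notin E$, so lying in $G^c$) carries unit flow. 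Collecting these $n$ saturated edges $\alpha_i\beta_{j(i)}$ defines the candidate set $\{e_1,\ldots,e_n\}\subseteq E(G^c)$, and I would set $G^*:=G\cup\{e_1,\ldots,e_n\}$.

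Next I would verify the two degree conditions of Theorem~\ref{thm:exactkmatching} for $G^*$ with target value $k+1$. For the left-nodes: each $\alpha_i$ gains exactly one incident edge (one saturated $\alpha_i\beta_{j(i)}$), and since $G$ was a union of $k$ disjoint left-perfect matchings, $\deg(\alpha_i;G)=k$ by Theorem~\ref{thm:exactkmatching}; hence $\deg(\alpha_i;G^*)=k+1$. The crucial point to check is that the $n$ new edges are \emph{distinct} as edges of $G^c$ and attach to \emph{distinct} $\alpha_i$'s — this is automatic because each originates from a distinct saturated source-edge $s\alpha_i$. For the right-nodes: $\deg(\beta_j;G^*)=\deg(\beta_j;G)+(\text{number of new edges into }\beta_j)$, where the latter equals the incoming flow $\sum_i f(\alpha_i\beta_j)=f(\beta_j t)$ by balance at $\beta_j$. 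The capacity constraint $f(\beta_j t)\le \hat c_k(\beta_j t)=\max\{0,(k+1)-\deg(\beta_j;G)\}$ then yields $\deg(\beta_j;G^*)\le\deg(\beta_j;G)+(k+1)-\deg(\beta_j;G)=k+1$ whenever the cap is positive, and $\deg(\beta_j;G^*)=\deg(\beta_j;G)\le k+1$ when the cap is zero. Either way $\deg(\beta_j;G^*)\le k+1$, so condition (2) holds.

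With both conditions verified, Theorem~\ref{thm:exactkmatching} immediately gives that $G^*$ is a union of $(k+1)$ disjoint left-perfect matchings, completing the argument. I expect the main obstacle to be the right-node bookkeeping: one must be careful that the capacity cap $\hat c_k(\beta_j t)$ is defined precisely so that \emph{adding} the new edges cannot push $\deg(\beta_j;G^*)$ above $k+1$, and that the $\max\{0,\cdot\}$ correctly handles right-nodes already at degree $\ge k+1$ in $G$ (for which no new edge may be attached). A secondary subtlety worth a sentence is confirming that the new edges genuinely lie in $G^c$ rather than accidentally duplicating existing edges of $G$ — but this is guaranteed by the construction of $\hat E_0$, which only includes $\alpha_i\beta_j$ with $(\alpha_i,\beta_j)\notin E$. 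The distinctness and disjointness from $E$ together ensure $\{e_1,\ldots,e_n\}$ are $n$ genuine new edges, so that $|E(G^*)|=|E(G)|+n=(k+1)n$, consistent with Theorem~\ref{thm:exactkmatching}.
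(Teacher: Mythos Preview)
Your proposal is correct and follows essentially the same approach as the paper: extract the $n$ flow-carrying edges $\alpha_i\beta_j$ from an integral max-flow of value $n$ on $\hat G^c$, then verify the two degree conditions of Theorem~\ref{thm:exactkmatching} for $G^*$ at level $k+1$. Your treatment is in fact slightly more careful in handling the $\max\{0,\cdot\}$ in the capacity of $\beta_j t$, though under the hypothesis that $G$ is a union of $k$ disjoint left-perfect matchings one already has $\deg(\beta_j;G)\le k$, so that case never actually arises.
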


\begin{proof}
Given the flow $f$ on $\hat G^c$, we let $G^c_f$ be the subgraph of $G^c$ induced by~$f$ as defined in~\eqref{eq:defgraphinducedbysaturatedflow1}. 

Because $|f| = n$ and because the capacity assigned to the edges $s\alpha_i$, for $\alpha_i\in V_\alpha$, is~$1$, the inflow at every node $\alpha_i$ is also~$1$. Also, since the capacities of edges of type $\alpha_i\beta_j$ are  $1$, we have  that there are exactly $n$ edges of this type for which $f$ is nonzero, and thus there are exactly $n$ edges in $G^c_f$. 
By construction, these edges are incident to $n$ {\em distinct} left nodes (as otherwise, it implies that an edge of type $s\alpha_i$ has a flow above its capacity of $1$). Denote by  $\{e_1,\ldots, e_n\}$ this set of edges in $G^c_f$. 

We show  that adding this set of edges to $G$  yields a $G^*$ which is a union of $(k+1)$ disjoint left-perfect matchings. We do so by verifying that $G^*$ satisfies the two items in Theorem~\ref{thm:exactkmatching}:
\begin{enumerate}
    \item $\deg(\alpha_i;G^*)=k+1$, for all $\alpha_i \in V_\alpha$.   This holds because of the following three facts: 
    First, by assumption, $\deg(\alpha_i, G) = k$. Next, note that $G$ and $G^c_f$ have disjoint sets of edges. Finally, the edges $e_1,\ldots,e_n$ in $G^c_f$ are incident to $n$ distinct left nodes.
    
    \item $\deg(\beta_j;G^*)\leq k+1$ for all $\beta_j \in V_\beta$. This holds because of the following three facts: First, by assumption, $\deg(\beta_j;G)\le k $ for $\beta_j \in V_\beta$. Second, recalling that the  capacities of the edges  in $\hat E_1$ are given in Eq.~\eqref{eq:capacity2}, we have that for each right node $\beta_j$, 
$$
\deg(\beta_j; G^c_f) \le (k+1) - \deg(\beta_j;G).
$$
Finally, because $G^*$ is the disjoint union of $G$ and $G^c_f$,  $\deg(\beta_j; G^*) = \deg(\beta_j;G) + \deg(\beta_j;G^c_f)\leq (k+1)$. 
\end{enumerate}
We have thus shown that the two items of Theorem~\ref{thm:exactkmatching} are satisfied by $G^*$. This completes the proof.
\end{proof}

Equipped with the above Proposition,  Theorem~\ref{thm:ktok+1} is easily seen to be equivalent to the following result:

\begin{proposition}\label{prop:opapa}
Let $G(n,m)$ be a union of $k$ disjoint left-perfect matchings, for $0\leq k < m$, and $\hat G^c(n,m)$ be as in Def.~\ref{def:alg2}.  
Let $f$ be a solution to the max-flow problem~\eqref{eq:defmaxflowalgo2}. Then, $|f| = n$.
\end{proposition}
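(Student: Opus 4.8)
The proposition claims the max-flow value equals exactly $n$. I need to show both $|f| \le n$ and $|f| \ge n$. The upper bound is immediate: the source $s$ has exactly $n$ outgoing edges $s\alpha_i$, each of capacity $1$, so $|f| = \sum_i f(s\alpha_i) \le n$. The real content is the lower bound $|f| \ge n$, equivalently that the max-flow is at least $n$. The suggested approach in the excerpt is to use the max-flow min-cut theorem (Lemma~\ref{lem:maxflowmincut}) and compute the minimum cut capacity explicitly, showing it is at least $n$.

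**Setting up the min-cut computation.** Let me verify this plan. By Lemma~\ref{lem:maxflowmincut}, $|f| = \min_{(S,T)} \hat c_k(S,T)$. I want to show every $s$-$t$ cut has capacity $\ge n$. Fix a cut $(S,T)$ with $s \in S$, $t \in T$. Let $A = V_\alpha \cap T$ (left-nodes on the target side) and $B = V_\beta \cap S$ (right-nodes on the source side). The edges crossing from $S$ to $T$ fall into three types: (i) edges $s\alpha_i$ with $\alpha_i \in A$, contributing $|A|$ (each capacity $1$); (ii) edges $\beta_j t$ with $\beta_j \in B$, contributing $\sum_{\beta_j \in B} \max\{0, (k+1) - \deg(\beta_j; G)\}$; (iii) edges $\alpha_i \beta_j \in \hat E_0$ (i.e. $(\alpha_i, \beta_j) \notin E$) with $\alpha_i \in V_\alpha \cap S$ and $\beta_j \in V_\beta \cap T$, each of capacity $1$.

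**The core counting argument.** The strategy is to bound the cut capacity from below by $n$. Consider the left-nodes in $V_\alpha \cap S$ (there are $n - |A|$ of them). For such an $\alpha_i$, count its non-edges (in $G^c$) going to $V_\beta \cap T$; these are type-(iii) crossing edges. I expect the key combinatorial fact to be that if the type-(iii) and type-(ii) contributions are together small, then many left-nodes in $S$ must connect (via non-edges) only into $B = V_\beta \cap S$, which forces $B$ to be large and carry substantial capacity through type-(ii) edges. More precisely, the plan is: for each $\alpha_i \in V_\alpha \cap S$, since $\deg(\alpha_i; G) = k$ (by Theorem~\ref{thm:exactkmatching}), it has exactly $m - k$ non-neighbors; those non-neighbors lying in $T$ contribute to the cut, and those lying in $B$ must be "paid for" through the capacities on $B$. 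The heart of the argument is a careful accounting balancing $|A|$, the type-(iii) edges, and the saturated capacity $\sum_{\beta_j \in B}\big((k+1)-\deg(\beta_j;G)\big)$, using the global identity $\sum_j \deg(\beta_j; G) = kn$.

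**Main obstacle and completion.** I expect the main difficulty to be handling the $\max\{0, \cdot\}$ truncation in the capacity~\eqref{eq:capacity2}: right-nodes with $\deg(\beta_j; G) \ge k+1$ contribute zero capacity on their $\beta_j t$ edge, so placing them in $B$ is "free," which could in principle lower the cut. The resolution is that such high-degree right-nodes, when in $B$, force their many $G$-neighbors' non-edges to be counted carefully, and one shows the free right-nodes cannot help reduce the cut below $n$. Once the min-cut is shown to be $\ge n$, combined with the trivial upper bound $|f| \le n$ from the source edges and the existence of a flow of value $n$ (guaranteed since the min-cut equals $n$), we conclude $|f| = n$. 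I would organize the write-up as: (1) upper bound via source capacity; (2) reduce to showing min-cut $\ge n$ via Lemma~\ref{lem:maxflowmincut}; (3) the three-type decomposition of an arbitrary cut; (4) the counting argument using $\deg(\alpha_i;G)=k$ and $\sum_j \deg(\beta_j;G)=kn$ to bound each cut below by $n$.
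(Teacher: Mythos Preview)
Your plan is correct and matches the paper's proof: reduce to min-cut via Lemma~\ref{lem:maxflowmincut}, split any cut into the three edge types you list, and use the degree conditions from Theorem~\ref{thm:exactkmatching} (namely $\deg(\alpha_i;G)=k$ and $\deg(\beta_j;G)\le k$) to bound the total below by~$n$. One simplification you missed: your ``main obstacle'' is not an obstacle at all. Because $G$ is a union of $k$ disjoint left-perfect matchings, Theorem~\ref{thm:exactkmatching} gives $\deg(\beta_j;G)\le k$ for every right-node, so $(k+1)-\deg(\beta_j;G)\ge 1$ and the $\max\{0,\cdot\}$ in~\eqref{eq:capacity2} is never active; you can simply write $c(S_\beta,t)=(k+1)|B|-\sum_{\beta_j\in B}\deg(\beta_j;G)$, and the $\sum\deg(\beta_j;G)$ term cancels cleanly against the corresponding term in your type-(iii) count, exactly as in the paper's equations~\eqref{eq:opapa2nd}--\eqref{eq:opapa3rd}.
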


\begin{proof}
To prove the result, we rely on the use of the max-flow min-cut Theorem (see Lemma~\ref{lem:maxflowmincut}), which applied here reduces the problem to showing that for every cut $(S,T)$ in  $\hat G^c$, its capacity $c(S,T) \geq n$ and, furthermore, this  lower-bound is realizable. 

For a given cut $(S,T)$ in $\hat G^c$, we let $S_\alpha:=S \cap V_\alpha$ and $T_\alpha:=T \cap V_\alpha$ be the sets of left-nodes contained in $S$ and $T$ respectively. Similarly, we define $S_\beta:=S \cap V_\beta$ and $T_\beta:=T\cap V_\beta$. 
Let $p:= |T_\alpha|$ and $q:= |S_\beta|$. For every such cut, we can write its capacity into the sum of three terms:  
\begin{equation}\label{eq:capcut}
    c(S,T) = c(s, T_\alpha) + c(S_\alpha, T_\beta) + c(S_\beta,t),
\end{equation}
where the three terms are given by
\begin{equation}\label{eq:threecuts}
\left\{
\begin{array}{lll}
 c(s, T_\alpha)& := &\sum_{\alpha_i \in T_\alpha} 
c(s\alpha_i), \\
 c(S_\alpha, T_\beta)&:= &\sum_{\alpha_i \in S_\alpha, \beta_j\in T_\beta} c(\alpha_i\beta_j), \\
 c(S_\beta, t)& := &\sum_{\beta_j \in S_\beta} 
c(\beta_j t).
\end{array}
\right.
\end{equation}

\begin{figure}
    \centering
 \includegraphics{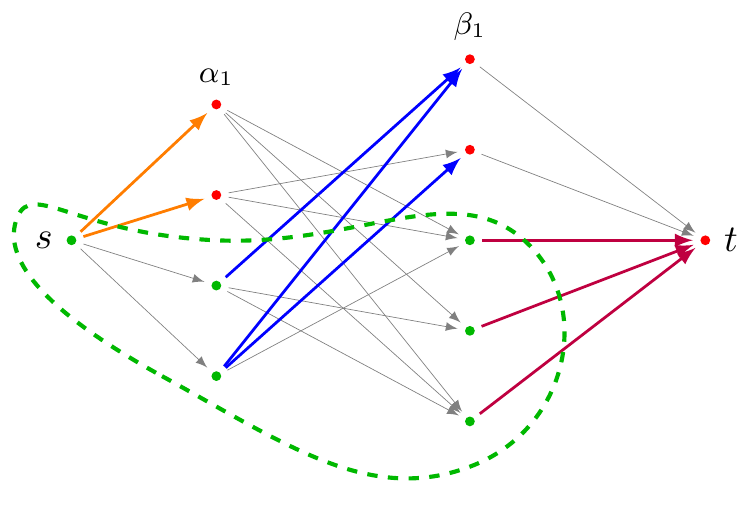}

\caption{We illustrate the three terms defined in~\eqref{eq:threecuts}. In this digraph, we let the cut $(S, T)$ be such that the nodes depicted in green (resp. red) are nodes in $S$ (resp. $T$). The set $S$ is circled by the dashed green line. Then, the term $c(s,T_\alpha)$ is the sum of the capacities of the edges depicted in orange, the term $c(S_\alpha, T_\beta)$  is the sum of the capacities of the edges depicted in blue, and the term $c(S_\beta, t)$ is the sum of the capacities of the edges depicted in purple. The capacity of the cut $(S,T)$ is easily seen to be sum of these three terms.}
    \label{fig:pear}
\end{figure}
We evaluate below these three terms (also, see Fig.~\ref{fig:pear} for an illustration): 
\vspace{.1cm}

\noindent
{\em First term $c(s, T_\alpha)$.}  Note that by item~3 of Def.~\ref{def:alg2}, $\hat c_k(s \alpha_i) = 1$, for $\alpha_i \in T_\alpha$, so 
    \begin{equation}\label{eq:opapafirst}
    c(s, T_\alpha)  = p.
    \end{equation}
    
\vspace{.1cm}

\noindent
{\em Second term $c(S_\alpha, T_\beta)$.} We first establish the following inequality:
    \begin{equation}\label{eq:lowerboundfor2ndterm}
    c(S_\alpha, T_\beta) \ge \sum_{\alpha_i \in S_\alpha} \deg(\alpha_i; G^c) - \sum_{\beta_j\in S_\beta} \deg(\beta_j; G^c).
    \end{equation}
    To see it holds, first note that total number of out-going edges incident to the nodes $\alpha_i \in S_\alpha$ is exactly given by $\sum_{\alpha_i\in S_\alpha}\deg(\alpha_i; G^c)$. Every such outgoing edge is necessarily incident to either a node in $S_\beta$ or a node in $T_\beta$. Furthermore, the  number of incoming edges incident to nodes $\beta_j\in S_\beta$ is given by $\sum_{\beta_j\in S_\beta} \deg(\beta_j; G^c)$. Similarly, every such incoming edges can be incident to either a node in $S_\alpha$ or a node in $T_\alpha$.  
    It then follows that the number of  edges incident to both $S_\alpha$ and $T_\beta$ in $\hat G^c$ is bounded below by the expression on the right hand side  of~\eqref{eq:lowerboundfor2ndterm}. Because $\hat c_k(\alpha_i\beta_j)=1$, the inequality~\eqref{eq:lowerboundfor2ndterm} holds. 
    
    Now, we evaluate the two sums on the right hand side of~\eqref{eq:lowerboundfor2ndterm}.  
    For the first sum, since $G$ is a union of $k$ disjoint left-perfect matchings, we have that $\deg(\alpha_i;G^c)= (m-k)$ for all $i=1,\ldots, n$. Further, since $|S_\alpha|=n-|T_\alpha|=n-p$,       \begin{equation}\label{eq:firstterm}
        \sum_{\alpha_i \in S_\alpha} \deg(\alpha_i; G^c)  = (m - k)(n-p). 
    \end{equation}    
    For the second sum, 
    since the degree of each node $\beta_j$ in $G^c$ is $n - \deg(\beta_j;G)$ and since $|S_\beta|=q$,  
    \begin{equation}\label{eq:secondterm}    
        \sum_{\beta_j\in S_\beta} \deg(\beta_j; G^c)  = qn - \sum_{\beta_j\in S_\beta} \deg (\beta_j; G).
    \end{equation}
    
     Plugging Eqs.~\eqref{eq:firstterm} and~\eqref{eq:secondterm} in~\eqref{eq:lowerboundfor2ndterm}, we obtain that
    \begin{equation}\label{eq:opapa2nd}
        c(S_\alpha, T_\beta) \ge (m-k)(n-p) - qn + \sum_{\beta_j\in S_\beta} \deg(\beta_j;G).
    \end{equation}
    
\vspace{.1cm}

\noindent
{\em Third term $c(S_\beta,t)$.} From~\eqref{eq:capacity2} and the fact that $|S_\beta|=q$, 
\begin{equation}\label{eq:opapa3rd} 
c(S_\beta, t) = (k+1)q - \sum_{\beta_j\in S_\beta} \deg(\beta_j;G).
\end{equation}

We now use the facts just established to show that $c(S, T) \ge n$. Specifically, we use Equations~\eqref{eq:capcut}-\eqref{eq:opapafirst} and~\eqref{eq:opapa2nd}-\eqref{eq:opapa3rd} to obtain that
    \begin{align*}
    c(S,T) &\ge p + (m-k)(n-p) - qn + (k+1)q \\
    & \ge (p+q) + (m-k)(n-p) - (n-k)q \\
    & \ge (p+q) + (m-k)(n-p-q) \\
    & \ge (p+q) + (n - p - q) \\
    & \ge n.
    \end{align*}
To obtain the second line from the first, we simply rearrange terms. To obtain the third line from the second, we use the fact that $m \geq n$, and using furthermore the assumption that $m > k$, we obtain the fourth line from the third. This concludes the proof.
\end{proof}

At the end of this subsection, we conclude that 
given a graph $G(n,m)$ which is a union of $k$ disjoint left-perfect matchings, and $1 \leq \ell \leq m-k$, one can obtain a 
$G^*(n,m) \succ G(n,m)$ which is a union of $(k+\ell)$ disjoint left-perfect matchings using the following algorithm in polynomial-time: Start by setting $\ell'=0$ and $G'=G$; While $\ell' <\ell$, repeat the following steps
\begin{enumerate}
    \item Construct $\hat G'^c$ and $\hat c_{k+\ell'}$ given by Def.~\ref{def:alg2}. The complexity is $O(m+n)$. 
    \item Run the Ford-Fulkerson algorithm on $\hat G'^c$ and denote by $f$ the output. The complexity is $O(n^2(m-k))$. 
    \item Update $G'$ to be the union of the current $G'$ and $G^c_f$ (note that $G'$ and $G^c_f$ are edge-wise disjoint) and increase $\ell'$ by $1$. The complexity is $O(n)$.
\end{enumerate}

\subsection{Solutions to Problems P2 and P3}

In this section, we let $G(n,m)$ be an arbitrary bipartite graph, with $m \ge n$ as above, and $G^c(n,m)$ be its complement in the complete bipartite graph $K(n,m)$.
 
Recall that for Problem P2, we aim to find a set of edges in $G^c$ of least cardinality which, when added to $G$, yields a graph which is (strongly) $k$-resilient, 
and for Problem P3,  given a budget of $p$ edges and a graph $G$, we aim to maximize the degree of (strong) resilience by optimally choosing these $p$ additional edges.

We provide below complete solutions to the two problems for {\em strong} resilience, together with a polynomial-time algorithm that fulfills the respective goals.

\vspace{.1cm}

\noindent
{\em Fair matchings and fair $b$-matchings.}  
One of the major hurdles in adding edges to $G$ to increase the number of left-perfect matchings is that one has the option to use edges that already exist in $G$ to create said additional matchings. The use of these existing edges should of course be prioritized as much as possible over the addition of new edges. We can recast this problem by considering  the embedding of $G$ into the complete bipartite graph $K=(V_\alpha,V_\beta,E_K)$. This embedding allows us to view both Problems P2 and P3, which are dual to each other, as the problem of selecting edges in $K$ to obtain a desired number of disjoint left-perfect matchings {\em while maximizing} the use of edges that belong to $G$. 
Moreover, this point of view will allow us to appeal to algorithms that obtain such matchings, and thus solve the above-mentioned problem, in polynomial time.

To proceed, we rely on  the notion of fair matching and, more specifically, {\em fair  b-matching} in a bipartite graph. Such matchings are described in relation to the following additional structures on a graph:

\begin{enumerate}
    \item A {\em capacity function} $\mu$ at the {\em nodes}, which is a positive-integer valued function $\mu:V_\alpha \cup V_\beta \to \Z_{\geq 0}$ which provides an upper bound on the degrees of the nodes in a $b$-matching.
    \item  A {\em priority order} for the possible neighbors of each node. Assuming that there are $r$ different priorities, we label them as $1,\ldots,r$. The priority order indicates which edges of $G$ are preferred to appear in the matching. 
    \end{enumerate}
For our purpose, we only need to consider a particular class of fair $b$-matching problems: (1) Elements of that class are defined on the complete bipartite graph $K$; (2) The capacity functions $\mu$ are constant functions with value equal to $(k^*+1)$, where $k^*$ is the target degree of strong-resilience; and (3) The priority order has $r = 2$ classes, and is induced by $G$ in the sense that a node $\alpha_i$ (resp. $\beta_j$) prefers $\beta_j$ (resp. $\alpha_i$) if $(\alpha_i,\beta_j)$ is an edge in $G$. We refer the reader to~\cite{anstee1987polynomial,huang2016fair}) for a general introduction to $b$-matchings. 

Formally, we introduce the following definition of $b$-matching and fair $b$-matching considered in this paper: 

\begin{definition}[$b$-matching and fair $b$-matching]\label{def:bmatching} Let $K = (V_\alpha \cup V_\beta, E_K)$ be the complete bipartite graph and $G = (V_\alpha \cup V_\beta, E)$ be a subgraph of $K$. 
A {\em $b$-matching} is a subset $P \subseteq E_K$ for which each vertex $v\in V_\alpha \cup V_\beta$ is incident to at most $(k^*+1)$ edges of $P$. 
A {\em fair b-matching} is a $b$-matching of {\em maximal cardinality} so that $|P \cap E|$ is maximized.
\end{definition}

We make the following observation:

\begin{lemma}\label{lem:bmatching}
If $P$ is a $b$-matching with of maximal cardinality, then $P$ is a disjoint union of $(k^*+1)$ disjoint left-perfect matchings, and vice- versa.  
\end{lemma}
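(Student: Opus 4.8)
The plan is to first pin down the maximal cardinality of a $b$-matching in $K$, and then read off the equivalence directly from Theorem~\ref{thm:exactkmatching}. Since every edge of a $b$-matching $P$ is incident to exactly one left-node, and each left-node is incident to at most $(k^*+1)$ edges of $P$, summing the degrees over the left-nodes gives $|P| = \sum_{\alpha_i \in V_\alpha}\deg(\alpha_i;P) \le n(k^*+1)$. I would then argue that this bound is attained: assuming $k^*+1 \le m$ (which holds whenever $k^*$ is an achievable target, by item~2 of Cor.~\ref{cor:littlebc}), the complete bipartite graph $K(n,m)$ contains $(k^*+1)$ pairwise disjoint left-perfect matchings---e.g.\ take any $(k^*+1)$ of the $m$ disjoint left-perfect matchings that $K$ admits by Lemma~\ref{lem:reskpm}---whose union is a $b$-matching of cardinality exactly $n(k^*+1)$. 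Hence the maximal cardinality of a $b$-matching equals $n(k^*+1)$.

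For the forward direction, suppose $P$ is a $b$-matching of maximal cardinality, so $|P| = n(k^*+1)$. Because each of the $n$ left-nodes has degree at most $(k^*+1)$ in $P$ while the degrees sum to $n(k^*+1)$, every left-node must have degree exactly $(k^*+1)$. By the $b$-matching constraint each right-node has degree at most $(k^*+1)$. These are precisely the two hypotheses of Theorem~\ref{thm:exactkmatching} with $k = k^*+1$, so the subgraph $(V_\alpha \cup V_\beta, P)$ is a union of $(k^*+1)$ disjoint left-perfect matchings.

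For the converse, suppose $P$ is a disjoint union of $(k^*+1)$ left-perfect matchings. Each left-perfect matching, being a matching, contributes exactly one edge at every left-node and at most one edge at every right-node; since the matchings are edge-disjoint these contributions do not coincide, so $\deg(\alpha_i;P) = k^*+1$ and $\deg(\beta_j;P) \le k^*+1$ for all nodes, whence $P$ is a $b$-matching. Its cardinality is $(k^*+1)n$, which the first step showed is maximal, so $P$ is a $b$-matching of maximal cardinality.

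The only point requiring care is the attainability claim of the first step, namely that the upper bound $n(k^*+1)$ is actually realized inside $K$; this is where the standing hypothesis $k^*+1\le m$ enters and where I invoke the existence of $m$ disjoint left-perfect matchings in the complete bipartite graph. Everything else reduces to a degree-counting argument feeding into the characterization of Theorem~\ref{thm:exactkmatching}.
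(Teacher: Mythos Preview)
Your proof is correct and follows essentially the same approach as the paper: both establish the upper bound $|P|\le n(k^*+1)$ by degree counting, show it is attained by a union of $(k^*+1)$ disjoint left-perfect matchings, and then invoke Theorem~\ref{thm:exactkmatching} to handle the forward direction. Your forward direction is marginally more direct---you deduce $\deg(\alpha_i;P)=k^*+1$ for all $i$ from the degree sum and apply Theorem~\ref{thm:exactkmatching} immediately, whereas the paper argues by contradiction---and you are more explicit about the standing hypothesis $k^*+1\le m$ needed for attainability, but the substance is the same.
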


\begin{proof}
First, it should be clear that if $P$ is a $b$-matching, then by the capacity condition in Def.~\ref{def:bmatching}, $|P|\leq (k^*+1)n$. 
Next, let $P$ be an arbitrary union of $(k^*+1)$ disjoint left-perfect matchings. Then, $P$ satisfies the capacity condition and $|P| = (k^*+1)n$. Thus, such a $P$ is a $b$-matching of maximal cardinality.

Now, let $P$ be a $b$-matching of maximal cardinality and $G=(V_\alpha\cup V_\beta,P)$. Suppose that $P$ is {\em not} a union of $(k^*+1)$ disjoint left-perfect matchings; then, by Theorem~\ref{thm:exactkmatching} and the capacity condition in Def.~\ref{def:bmatching}, there exists at least one node $\alpha_i \in V_\alpha$ such that 
\begin{equation}\label{eq:ourlittledeg}
\deg(\alpha_i;G) < k^*+1.
\end{equation}
To see this, note that a graph $G$ induced by a $b$-matching always satisfies item~2 of Theorem~\ref{thm:exactkmatching}; hence, if $G$ is not a union of $(k^*+1)$ disjoint left-perfect matchings, then item~1 cannot be met, which implies that $\deg(\alpha_i;G)< k^*+1$ for some $\alpha_i$. 
On the one hand, as a consequence of Eq.~\eqref{eq:ourlittledeg}, the cardinality of $P$ is strictly less than $(k^*+1)n$. 
On the other hand, by the arguments at the beginning of the proof,  
if we let $P'$ be an arbitrary union of $(k^*+1)$ disjoint left-perfect matchings, then $P'$ is a $b$-matching with  $|P'| = (k^*+1)n>|P|$, which is a contradiction.
\end{proof}

If $P^*\subseteq E_K$ is a $b$-matching of maximal cardinality, a {\em fair} $b$-matching can be obtained by first finding all $b$-matchings of cardinality $|P^*|$ and, amongst those, selecting one which maximizes  $|P^* \cap E|$.  
It is known that finding a fair $b$-matching in $K(n,m)$ can be done in polynomial time. 
To be more precise, if we let $N:=m+n$ be the number of nodes of $K(n,m)$ and $M:=mn$ be the number of edges in $K(n,m)$, there exist algorithms solving fair $b$-matching problems in $O(NM\log(N^2/M)\log (N))$ time, using $O(M)$ space~\cite{huang2016fair}. 

\vspace{.1cm}

\noindent
{\em Solution to Problem P2 for strong resilience.}
We  now reduce Problem P2 to the fair $b$-matching problem. 
Let $k^*$ be the target degree of strong resilience.  
If $\srs G \geq k^*$, then no additional edge is needed and we are done. Otherwise, we have the following result: 

\begin{theorem}\label{thm:solutiontop2}
Let $G(n,m) = (V_\alpha \cup V_\beta, E)$ be a bipartite graph with $m\ge n$ and $\srs G <k^*$ with $0\leq k^*\leq (m-1)$. Let $P^*$ be a solution to the fair $b$-matching problem of Def.~\ref{def:bmatching}. 
Then, the following hold: 
\begin{enumerate}
    \item The graph $G^*(n,m):=(V_\alpha \cup V_\beta, E \cup P^*)$ is strongly $k^*$-resilient.
    \item The minimal number of edges out of $G^c(n,m)$ one needs to add to $G(n,m)$ to obtain a strongly $k^*$-resilient graph $G^*(n,m)$ is given by
    \begin{equation}\label{eq:minimalnumberofedges}
\delta^* := |P^*| - |P^* \cap E|.
\end{equation} 
\end{enumerate}
\end{theorem}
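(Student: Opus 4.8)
The plan is to prove the two items by connecting the fair $b$-matching structure to the characterization of strongly $k^*$-resilient graphs via Lemma~\ref{lem:reskpm} and Theorem~\ref{thm:exactkmatching}. First I would establish item~1. Since $\srs G < k^*$, the hypothesis guarantees $0 \le k^* \le m-1$, so a union of $(k^*+1)$ disjoint left-perfect matchings is feasible in the complete bipartite graph $K(n,m)$ (indeed $K$ itself is a union of $m$ disjoint left-perfect matchings by Theorem~\ref{thm:exactkmatching}, and $k^*+1 \le m$). Thus a $b$-matching of maximal cardinality exists, and by Lemma~\ref{lem:bmatching} the fair $b$-matching $P^*$ is a union of $(k^*+1)$ disjoint left-perfect matchings. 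The graph $G^*=(V_\alpha \cup V_\beta, E \cup P^*)$ contains these $(k^*+1)$ disjoint matchings as a subgraph, so by Lemma~\ref{lem:reskpm} it is \emph{at least} strongly $k^*$-resilient. To get \emph{exactly} strongly $k^*$-resilient, I would need to argue that $G^*$ does not contain $(k^*+2)$ disjoint left-perfect matchings; this follows because the edge set $E \cup P^*$ has degree exactly $(k^*+1)$ at every left-node (each left-node is saturated to its capacity $(k^*+1)$ in a maximal $b$-matching), and by the necessity of item~1 in Theorem~\ref{thm:exactkmatching}, a union of $(k^*+2)$ disjoint left-perfect matchings would require every left-node to have degree $\ge k^*+2$, a contradiction.

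Next I would prove item~2, the optimality of $\delta^* = |P^*| - |P^* \cap E|$. The key is that the edges added from $G^c$ are precisely those edges of $P^*$ that are \emph{not} already in $E$, so the number of newly added edges equals $|P^* \setminus E| = |P^*| - |P^* \cap E|$. Since $P^*$ is a union of $(k^*+1)$ disjoint left-perfect matchings, $|P^*| = (k^*+1)n$ is fixed, so minimizing the number of added edges is equivalent to \emph{maximizing} $|P^* \cap E|$—which is exactly the defining property of a fair $b$-matching. For the lower bound, I would take any strongly $k^*$-resilient graph $G^{**} = (V_\alpha \cup V_\beta, E \cup A)$ obtained by adding a set $A \subseteq E_K \setminus E$ of edges to $G$. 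By Lemma~\ref{lem:reskpm}, $G^{**}$ contains $(k^*+1)$ disjoint left-perfect matchings; call their union $Q \subseteq E \cup A$. Then $Q$ is a $b$-matching of maximal cardinality $(k^*+1)n$, so by the defining maximality of the fair $b$-matching, $|Q \cap E| \le |P^* \cap E|$. The number of added edges satisfies $|A| \ge |Q \setminus E| = |Q| - |Q \cap E| = (k^*+1)n - |Q \cap E| \ge (k^*+1)n - |P^* \cap E| = |P^*| - |P^* \cap E| = \delta^*$, establishing that no valid addition uses fewer than $\delta^*$ edges.

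The main obstacle I anticipate is the lower-bound direction of item~2. One must be careful that an arbitrary strongly $k^*$-resilient $G^{**}$ need not have $P^*$ as a subgraph, nor need its added edges coincide with $P^* \setminus E$; the argument must instead extract \emph{some} union $Q$ of $(k^*+1)$ disjoint matchings from $G^{**}$ and compare $|Q \cap E|$ against $|P^* \cap E|$ using the fairness (maximality) of $P^*$. The subtlety is ensuring that $Q$ is itself a valid $b$-matching in $K$ so that the fairness optimality of $P^*$ applies—this holds because disjoint left-perfect matchings automatically satisfy the node-capacity bound $(k^*+1)$. A secondary point to verify carefully is that the added edges needed to realize $G^{**}$ are at least $|Q \setminus E|$, which is immediate since $Q \setminus E \subseteq A$. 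With these pieces in place, the equivalence between minimizing added edges and maximizing retained edges closes the argument.
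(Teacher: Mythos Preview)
Your argument for item~2 is correct and in fact more detailed than the paper's own proof, which simply observes that maximizing $|P^*\cap E|$ over maximum $b$-matchings is equivalent to minimizing $|P^*|-|P^*\cap E|$.

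However, your argument for the ``exactly'' direction of item~1 has a genuine gap. You claim that every left-node of $G^* = (V_\alpha \cup V_\beta, E \cup P^*)$ has degree exactly $k^*+1$, reasoning that each left-node is saturated to capacity in the maximum $b$-matching $P^*$. That is true of $P^*$ alone, but \emph{not} of $E \cup P^*$: the original edge set $E$ may contain edges incident to $\alpha_i$ that are not in $P^*$, in which case $\deg(\alpha_i; G^*) > k^*+1$. For a concrete instance, take $n=2$, $m=3$, $E = \{(\alpha_1,\beta_1),(\alpha_1,\beta_2),(\alpha_1,\beta_3),(\alpha_2,\beta_1)\}$, and $k^*=1$. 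Here $\srs G = 0 < k^*$, a fair $b$-matching is $P^* = \{(\alpha_1,\beta_1),(\alpha_1,\beta_2),(\alpha_2,\beta_1),(\alpha_2,\beta_2)\}$, yet $\deg(\alpha_1;G^*) = 3 > 2$. So your degree-counting argument does not rule out $(k^*+2)$ disjoint left-perfect matchings in $G^*$.

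The paper closes this gap by invoking the \emph{fairness} of $P^*$ a second time. Suppose $G^*$ contained $\bar k \ge k^*+2$ disjoint left-perfect matchings $P_1,\ldots,P_{\bar k}$. Set $\rho_i := |P_i \setminus E|$; since the $P_i$ are disjoint and $P_i \setminus E \subseteq P^*\setminus E$, one has $\sum_i \rho_i \le |P^*\setminus E|$. Because $\srs G < k^*$, not all $\rho_i$ can vanish (indeed at least two must be positive), so after ordering $\rho_1 \ge \cdots \ge \rho_{\bar k}$ one drops $P_1$ and takes $P := P_2 \cup \cdots \cup P_{k^*+2}$. This $P$ is a maximum $b$-matching with $|P\cap E| = |P^*| - \sum_{i=2}^{k^*+2}\rho_i > |P^*| - \sum_{i=1}^{\bar k}\rho_i \ge |P^*\cap E|$, contradicting the fairness of $P^*$. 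Thus fairness is essential not only for the optimality claim in item~2 but also to pin down exact strong $k^*$-resilience in item~1.
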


Note that for a given graph $G(n,m)$, $\delta^*$ depends only on the number $k^*$ (in particular, it does  {\em not} depend on the choice of $P^*$ from Def.~\ref{def:bmatching}). If necessary, we will write explicitly $\delta^*(k)$ to indicate such dependence.    

By item~1, we have that  $G^*\succ G$;  by item~2, $G^*$ contains the least number of additional edges so as to be strongly $k^*$-resilient. Thus, Problem P2 is indeed solved for strong $k^*$-resilience.

\begin{proof}[Proof of Theorem~\ref{thm:solutiontop2}]
We establish the two items below:
\vspace{.1cm}

\noindent
{\em Proof of item~1.} We show that $G^*$ contains exactly $(k^*+1)$ disjoint left-perfect matchings. By Lemma~\ref{lem:bmatching}, $P^*$ is a union of $(k^*+1)$ disjoint left-perfect matchings. Since the edge set of $G^*$ contains $P^*$, $G^*$ contains at least $(k^*+1)$ disjoint left-perfect matchings.  
Now, suppose, to the contrary, that $G^*$ contains (exactly) $\bar k$ disjoint left-perfect matchings, with $\bar k \geq (k^*+2)$; then, we let $\{P_i\}_{i=1}^{\bar k}$ be a set of such matchings. Let $\rho_i:=|P_i - E|$, i.e., $\rho_i$ is the number of edges in $P_i$ but not in $E$. It follows that $|E\cap P^*| = |P^*| - \sum^{\bar k}_{i = 1}\rho_i$. 
Relabel the $P_i$, if necessary, so that $\rho_1\geq \cdots \geq \rho_{\bar k}$. Then, $\rho_1$ has to be positive, since otherwise $\rho_i = 0$ for all $i = 1,\ldots, \bar k$, which implies that all these $\bar k$ matchings $P_i$ are contained in $E$, contradicting the assumption that $\srs G < k^*$ (the same arguments imply that $\rho_2$ has to be positive as well). Now, let $P:= \cup^{k^*+2}_{i = 2}P_i$. Then, by Lemma~\ref{lem:bmatching}, $P$ is a $b$-matching and $|P|=|P^*|$.  Moreover, $|E\cap P| = |P|- \sum^{k^*+2}_{i = 2}\rho_i > |E\cap P^*|$, 
which contradicts the assumption that $P^*$ is a {\em fair} $b$-matching.
This proves item~1.
\vspace{.1cm}

\noindent
{\em Proof of item~2.} Let $G^* = (V_\alpha\cup V_\beta, P^*)$ be induced by an arbitrary {\em fair} $b$-matching $P^*$. Then, the cardinality $|P^* \cap E|$ is maximized over all $b$-matchings $P$ of maximal cardinality and, thus, $|P|-|P\cap E|$ is minimized.  This proves item~2 and completes the proof.
\end{proof}

\noindent
{\em Solution to Problem P3 for strong resilience.}
Since Problem P3 is dual to Problem P2, it can similarly be solved via a reduction to the fair $b$-matching problem. 
Precisely, we have the following result: 

\begin{theorem}\label{thm:solutiontop3}
Let $G(n,m) = (V_\alpha \cup V_\beta, E)$ be a bipartite graph with $m\ge n$ and $p$ be a positive integer. Then, the solution to the following optimization problem (Problem P3 for strong resilience):
\begin{align*}
\max \srs G^*(n,m)  = (V_\alpha \cup V_\beta, E^*), \\
\mbox{ s.t. } G^*(n,m) \succeq G(n,m) \mbox{ and } |E^*| - |E| = p
\end{align*}
is given by
$$
\max \{ k \mid \delta^*(k) \le p\}, 
$$
where $\delta^*(k)$ is defined in~\eqref{eq:minimalnumberofedges}. 
\end{theorem}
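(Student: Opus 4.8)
The plan is to exploit the duality between Problems P2 and P3, using $\delta^*(k)$ as the bridge. Recall from Theorem~\ref{thm:solutiontop2} that $\delta^*(k)$ is the minimal number of edges from $G^c$ that must be added to $G$ to reach strong $k$-resilience; crucially, Theorem~\ref{thm:solutiontop2} guarantees this minimum is \emph{achievable} via a fair $b$-matching. So for P3 I want to show that the largest achievable strong resilience under a budget of $p$ added edges is exactly $\max\{k \mid \delta^*(k) \le p\}$. The argument splits naturally into two directions: achievability (the claimed $k$ can be reached within budget) and optimality (no larger $k$ is reachable).

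\emph{Achievability.} Let $k^\dagger := \max\{k \mid \delta^*(k) \le p\}$. By definition $\delta^*(k^\dagger) \le p$, so by Theorem~\ref{thm:solutiontop2} there is a set of $\delta^*(k^\dagger) \le p$ edges from $G^c$ whose addition yields a strongly $k^\dagger$-resilient graph. If $\delta^*(k^\dagger) < p$, I must still consume exactly $p$ edges to meet the constraint $|E^*|-|E| = p$; here I would add the remaining $p - \delta^*(k^\dagger)$ edges arbitrarily from $G^c$, invoking the monotonicity of strong resilience with respect to edge addition (Corollary~\ref{cor:littlebc} and the remark on monotonicity) to conclude the resulting $G^*$ is \emph{at least} strongly $k^\dagger$-resilient. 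Thus a feasible $G^*$ with $\srs G^* \ge k^\dagger$ exists.

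\emph{Optimality.} Suppose some feasible $G^*$ with $|E^*|-|E| = p$ achieves $\srs G^* = k > k^\dagger$. Then $G^*$ is strongly $k$-resilient, so by adding only these $p$ edges from $G^c$ we have reached strong $k$-resilience. But $\delta^*(k)$ is, by item~2 of Theorem~\ref{thm:solutiontop2}, the \emph{minimal} number of $G^c$-edges needed to reach strong $k$-resilience, whence $\delta^*(k) \le p$. This contradicts the maximality defining $k^\dagger$, since $k > k^\dagger$ would then also satisfy $\delta^*(k) \le p$. Therefore no feasible solution exceeds $k^\dagger$, and combined with achievability the maximum equals $k^\dagger = \max\{k \mid \delta^*(k) \le p\}$.

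The main subtlety I anticipate is ensuring the optimality direction is airtight: I must confirm that the $p$ added edges in an arbitrary feasible $G^*$ indeed all lie in $G^c$ (which holds because $G^* \succeq G$ forces the new edges to be outside $E$), so that the lower-bound interpretation of $\delta^*$ genuinely applies. A second point worth stating explicitly is that $\delta^*(k)$ is monotone nondecreasing in $k$—higher target resilience never requires fewer edges—so that the set $\{k \mid \delta^*(k) \le p\}$ is a downward-closed interval and the $\max$ is well-defined and finite (bounded by $m-1$ via Corollary~\ref{cor:littlebc}). Neither obstacle is deep, as both reduce to the already-established properties of fair $b$-matchings and monotonicity; the proof is essentially a clean duality argument resting on Theorem~\ref{thm:solutiontop2}.
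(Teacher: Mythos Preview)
Your proposal is correct and follows essentially the same two-direction duality argument as the paper's proof, which is very terse: achievability via Theorem~\ref{thm:solutiontop2} (add $\delta^*(k^\dagger)\le p$ edges, then pad), and optimality because any feasible $G^*$ reaching strong $k$-resilience witnesses $\delta^*(k)\le p$. Your version is simply more explicit about the padding step and about why the $\max$ is well-defined, points the paper glosses over.
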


\begin{proof}
It is an immediate consequence of Theorem~\ref{thm:solutiontop2}: On the one hand, for any $k$ with $\delta^*(k) \leq p$, one can always add $p$ edges out of $G^c$ to $G$ so that the resulting graph $G^*$ is at least strongly $k$-resilient. On the other hand, it is clear from the definition of $\delta^*(k)$ that it is infeasible to obtain a graph with strong $k$-resilience by adding fewer than $\delta^*(k)$ edges to $G$. 
\end{proof}

\section{Conclusion}
We have addressed in this paper the resilience of the structural rank of sparsity patterns. The first step in our approach to solve the problems was to recast them as problems posed for bipartite graphs. We then provided a  characterization of bipartite graphs corresponding to sparsity patterns of full rank (see Theorem~\ref{thm:exactkmatching}). Based on this characterization, we provided provably correct, polynomial-time algorithms to solve three problems dealing with (strong) resilience of the pattern: Given a sparsity pattern, (1) what is its degree of (strong) resilience, i.e., how many $\star$-entries can be removed without affecting the structural rank; (2) what is the minimal number of $\star$-entries one needs to add to a pattern so as to reach a target degree of (strong) resilience; and (3) given that one can add $p$ $\star$-entries to a sparsity-pattern, where to place these entries so as to maximize the degree of (strong) resilience.

\bibliographystyle{ieeetr}
\bibliography{main}

\appendix

\section*{Proof of Lemma~\ref{lem:frpm}}
\begin{proof}
To prove the result, we first introduce a few preliminaries. 
Given a digraph $\vec G = (V, \vec E)$ on $n$ nodes, we say that $\vec G$ admits a {\em Hamiltonian decomposition}~\cite{BELABBAS2013981} if there is a subgraph $\vec G' = (V, \vec E')$, with $\vec E' \subseteq \vec E$, such that $\vec G'$ is a {\em disjoint} union of cycles.   
To a sparsity pattern $\cS(n,n)$, we can associate a digraph $\vec G = (V, \vec E)$ on $n$ nodes $\gamma_1,\ldots, \gamma_n$ as follows: $\gamma_i\gamma_j \in \vec E$ if the pair $(i,j)$ belongs to $E(\cS(n,n))$. 
It is well-known that $\cS(n,n)$ admits a matrix of full rank if and only if $\vec G$ admits a Hamiltonian decomposition.  Let $G(n,n)$ be the bipartite graph associated with the same sparsity pattern $\cS(n,n)$. 
It is also well know that $G(n,n)$ has a perfect matching if and only if the digraph $\vec G$ admits a Hamiltonian decomposition (see~\cite{belabbas_algorithmsparse_2013} for a simple account of this fact). 

With the above preliminaries, we now return to establish Lemma~\ref{lem:frpm}. 
First, note that the rank $\cS(n,m)$ is $n$ if and only if there exist $n$ columns so that the sub-pattern induced by these columns is of full rank; precisely, there exists $1\leq j_1 < \cdots <j_n \leq m$ so that the sparsity pattern $\cS'(n,n)$ defined by the index set $$E(\cS')=\{(i,j_k)\in E(\cS) \mid 1\le k\le n \}$$
is of rank~$n$. 
Owing to the preliminaries above, $\cS'(n,n)$ is of full rank if and only if the associated digraph $\vec G'$ on $n$ nodes admits a Hamiltonian decomposition. Furthermore, the existence of this Hamiltonian decomposition implies that the bipartite graph $G'(n,n)$ corresponding to $\cS'(n,n)$ contains a perfect matching. 

From the definition of $\cS'$, it is not hard to see that $G'(n,n)$ can be realized as a subgraph of $G(n,m)$; more precisely, $G'(n,n)$ is the subgraph of $G(n,m)$ induced by the nodes $\alpha_i \in V_\alpha$ and nodes $\beta_{j_1},\ldots,\beta_{j_n}$. Thus, a perfect matching in $G'(n,n)$ is mapped using the above inclusion to a left-perfect matching in $G(n,m)$.  
We thus conclude that the rank of $\cS(n,m)$ is $n$ if and only if $G$ has a left-perfect matching. \end{proof}

\end{document}